\newcommand{\cross}{{\times}}
\newcommand{\dd}{\mathrm{d}}
\newcommand{\dint}{\displaystyle\int}
\newcommand{\RR}{\mathbb{R}}
\newcommand{\N}{\mathbb{N}}
\newcommand{\cO}{\mathcal{O}}
\newcommand{\cE}{\mathcal{E}}
\newcommand{\cH}{\mathcal{H}}
\newcommand{\cB}{\mathcal{B}}
\newcommand{\eps}{\varepsilon}
\newcommand{\mx}{\mathrm{max}}
\newcommand{\mn}{\mathrm{min}}
\newcommand{\vol}{\mathop{\mathrm{vol}}}
\newcommand{\meas}{\mathop{\mathrm{meas}}}
\newtheorem{theorem}{Theorem}[section]
\newtheorem{prop}[theorem]{Proposition}
\newtheorem{cor}[theorem]{Corollary}
\newtheorem{lemma}[theorem]{Lemma}
\theoremstyle{definition}
\newtheorem{rem}[theorem]{Remark}
\DeclareMathOperator{\dist}{dist}
\begin{document}

\title[Robin $p$-Laplacian]{On the $p$-Laplacian with Robin boundary conditions\\
and boundary trace theorems}

\author{Hynek Kova\v r\'\i k}

\address{DICATAM, Sezione di Matematica, Universit\`a degli studi di Brescia,
Via Branze 38, 25123 Brescia, Italy}
\email{hynek.kovarik@unibs.it}
\urladdr{http://dm.ing.unibs.it/~hynek.kovarik/}

\author{Konstantin Pankrashkin}

\address{Laboratoire de Math\'ematiques d'Orsay, Univ.~Paris-Sud, CNRS, Universit\'e Paris-Saclay, 91405 Orsay, France}
\email{konstantin.pankrashkin@math.u-psud.fr}
\urladdr{http://www.math.u-psud.fr/~pankrash/}

\date{\today}

\begin{abstract}
Let $\Omega\subset\RR^\nu$, $\nu\ge 2$, be a $C^{1,1}$ domain whose boundary $\partial\Omega$ is either compact
or behaves suitably at infinity.
For $p\in(1,\infty)$ and $\alpha>0$, define
\[
\Lambda(\Omega,p,\alpha):=\inf_{\substack{u\in W^{1,p}(\Omega)\\ u\not\equiv 0}}\dfrac{\displaystyle \int_\Omega |\nabla u|^p \mathrm{d} x - \alpha\displaystyle\int_{\partial\Omega} |u|^p\mathrm{d}\sigma}{\displaystyle\int_\Omega |u|^p\mathrm{d} x},
\]
where $\mathrm{d}\sigma$ is the surface measure on $\partial\Omega$.
We show the asymptotics
\[
\Lambda(\Omega,p,\alpha)=-(p-1)\alpha^{\frac{p}{p-1}} - (\nu-1)H_\mathrm{max}\, \alpha + o(\alpha),
\quad
\alpha\to+\infty,
\]
where $H_\mathrm{max}$ is the maximum mean curvature of $\partial\Omega$.
The asymptotic behavior of the associated minimizers is discussed as well.
The estimate
is then applied to the study of the best constant in a boundary trace theorem
for expanding domains, to the norm estimate for extension operators
and to related isoperimetric inequalities.
\end{abstract}

\subjclass[2010]{49R05, 35P30, 58C40}
\keywords{$p$-laplacian, Robin boundary conditions, boundary trace theorems, eigenvalue asymptotics, boundary concentration, mean curvature}
\maketitle

\section{\bf Introduction and main results}

\subsection{Problem setting}

Let $\Omega\subset \RR^\nu$, $\nu\ge 2$, be a domain with a sufficiently regular boundary.
For $\alpha>0$ and $p\in (1,\infty)$, consider the quantity
\begin{equation}   \label{eq-inf}
\Lambda(\Omega,p,\alpha):=\inf_{\substack{u\in W^{1,p}(\Omega)\\ u\not\equiv 0}}\dfrac{\displaystyle \int_\Omega |\nabla u|^p\,  \dd x - \alpha\displaystyle\int_{\partial\Omega} |u|^p\, \dd\sigma}{\displaystyle\int_\Omega |u|^p\, \dd x},
\end{equation}
where $\dd\sigma$ is the surface measure on $\partial\Omega$. Standard variational arguments show that under suitable assumptions, e.g.~if $\Omega$ is bounded
with a Lipschitz boundary, the problem \eqref{eq-inf} has a minimizer, see e.g. Proposition \ref{prop-minimizer} below. The respective Euler-Lagrange
equation takes the form of a non-linear eigenvalue problem
\begin{equation}
    \label{eq-pl}
-\Delta_p u= \Lambda\,  |u|^{p-2}u \text{ in } \Omega, \qquad
|\nabla u|^{p-2}\dfrac{\partial u}{\partial n}=\alpha |u|^{p-2} u \quad \text{ at }\quad \partial\Omega,
\end{equation}
where $\Delta_p$ is the $p$-Laplacian, $\Delta_p \,u = \nabla \cdot \big( |\nabla u|^{p-2}\nabla u \big)$, $n$ is the outer unit normal,
and $\Lambda= \Lambda(\Omega,p,\alpha)$. In the present paper
we work with $C^{1,1}$ domains, either bounded or with a suitable behavior at infinity (see below),
and we study the behavior of \eqref{eq-inf} as $\alpha$ tends to $+\infty$.
While the properties of $\Lambda(\Omega,p,\alpha)$ for $\alpha<0$ are well understood for any $p>1$, see e.g.~\cite{ekl} and 
references therein, the same problem for $\alpha\to +\infty$ was previously studied for the linear case $p=2$
only. It was shown in \cite{luzhu} that for bounded $C^1$ domains there holds
$\Lambda(\Omega,2,\alpha)=-\alpha^2+o(\alpha^2)$ as $\alpha\to +\infty$.
Under additional smoothness assumptions, a more detailed asymptotic expansion
\[
\Lambda(\Omega,2,\alpha)=-\alpha^2-(\nu-1)H_\mx(\Omega)\,\alpha+o(\alpha)
\]
was obtained first in \cite{p13,em} for $\nu=2$ and then in \cite{pp15,pp15b}
for the general case, where $H_\mx$ is the maximal mean curvature of the boundary.
Further terms in the asymptotic expansion can be calculated under
suitable geometric hypotheses, see e.g. \cite{dk,HK,HKR,pp15b}.
Non-smooth domains were studied as well, see e.g.~\cite{lp} and the recent preprint~\cite{bp}.
The above mentioned papers used a number of techniques which are specific
for the linear problems, such as the perturbation theory
for self-adjoint operators or a separation of variables, which are
not available for the $p$-Laplacian. 

\smallskip

In the present paper we are going to modify the existing approaches, which will allow us 
to feature the variational nature of the problem and to consider arbitrary values of $p$ in a unified way.
Furthermore, we work under weaker smoothness conditions
when compared to the preceding works, and only $C^{1,1}$ regularity
is assumed.

\subsection{Main result}

\noindent Let us pass to the exact formulation of our main result. In the sequel a domain $\Omega$ will be called {\bf admissible} if the following conditions are satisfied:
\begin{itemize}
\item[(i)] the boundary $\partial\Omega$ is $C^{1,1}$, i.e. is locally the graph of a function with a Lipschitz gradient,
\smallskip
\item[(ii)] the principal curvatures of $\partial\Omega$ are essentially bounded,
\item[(iii)] for some $\delta>0$ the map
\[
\partial\Omega\times(0,\delta)\ni(s,t)\mapsto s-tn(s)\in \big\{x\in \Omega:
\dist(x,\partial\Omega)<\delta\big\}
\]
is bijective.
\end{itemize}
The mean curvature $H$ of $\partial\Omega$ is the arithmetic mean of the principal curvatures,
and we set 
\begin{equation} \label{hmax}
H_\mx\equiv H_\mx(\Omega):=\mathop{\text{ess\,sup}} H. 
\end{equation}
We remark that we do not assume
that this value is attained.

An account of the differential geometry in the $C^{1,1}$ setting,
including the precise definition of the curvatures, can be found e.g. in~\cite[Section~3]{fu}.
In particular, the assumptions are satisfied for any domain
with a compact $C^{1,1}$ boundary. Another obvious example of an admissible domain is given by
any $C^{1,1}$ domain coinciding with a half-space outside a ball.
Our main result reads as follows:
\begin{theorem} \label{thm-main}
For any admissible domain $\Omega\subset\RR^\nu$ and any $p\in(1,\infty)$
there holds
\begin{equation} \label{eq-main}
\Lambda(\Omega,p,\alpha)=-(p-1)\alpha^{\frac{p}{p-1}} - (\nu-1)H_\mx(\Omega)\, \alpha
+ o(\alpha)
\text{ as } \alpha\to+\infty.
\end{equation}
\end{theorem}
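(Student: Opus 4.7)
My plan is to match an upper and a lower bound on $\Lambda(\Omega,p,\alpha)$ up to order $o(\alpha)$. Both bounds rely on the one-dimensional half-line model
\[
\mu^{1D}(\alpha):=\inf_{\substack{v\in W^{1,p}(0,\infty)\\ v\not\equiv 0}}\frac{\int_0^\infty |v'|^p\,\dd t-\alpha|v(0)|^p}{\int_0^\infty|v|^p\,\dd t}\,.
\]
A rescaling $v(t)\mapsto v(\alpha^{1/(p-1)}t)$, together with the explicit minimizer $\phi_\alpha(t)=e^{-\alpha^{1/(p-1)}t}$ (verified by direct substitution into the Euler--Lagrange equation and the Robin boundary condition), gives $\mu^{1D}(\alpha)=-(p-1)\alpha^{p/(p-1)}$, and $\phi_\alpha$ is concentrated in a boundary layer of width $\alpha^{-1/(p-1)}$. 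Near $\partial\Omega$ I work in tubular coordinates $(s,t)\mapsto s-tn(s)$ on $\Omega_\delta:=\{x\in\Omega:\dist(x,\partial\Omega)<\delta\}$: the volume element factors as $J(s,t)\,\dd\sigma(s)\,\dd t$ with
\[
J(s,t)=\prod_{j=1}^{\nu-1}\bigl(1-t\kappa_j(s)\bigr)=1-t(\nu-1)H(s)+O(t^2),
\]
and the Euclidean gradient decomposes orthogonally as $|\nabla u|^2=|\partial_t u|^2+|\nabla_s u|_g^2$ with $g$ the induced metric on the parallel surface. The $O(t)$ term in $J$ will produce the curvature correction.

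\textbf{Upper bound.} For $\eps>0$ pick a point $s_0\in\partial\Omega$ inside a small coordinate patch on which $H\ge H_\mx-\eps$ essentially, and let $\chi$ be a smooth cutoff supported in this patch. Plugging the trial function $u(s,t):=\chi(s)\phi_\alpha(t)$ into \eqref{eq-inf}, using the eigenvalue identity $\int|\phi_\alpha'|^p\,\dd t-\alpha|\phi_\alpha(0)|^p=\mu^{1D}(\alpha)\int|\phi_\alpha|^p\,\dd t$, and expanding $J$ to first order in $t$, the Rayleigh quotient becomes
\[
\mu^{1D}(\alpha)-(\nu-1)H(s_0)\cdot\frac{\int_0^\infty t\bigl(|\phi_\alpha'|^p-\mu^{1D}(\alpha)|\phi_\alpha|^p\bigr)\dd t}{\int_0^\infty|\phi_\alpha|^p\,\dd t}+o(\alpha),
\]
where the $o(\alpha)$ absorbs $\nabla\chi$ and the $O(t^2)$ remainder in $J$. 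A short computation with $\phi_\alpha(t)=e^{-\alpha^{1/(p-1)}t}$ shows the fraction above equals $\alpha$ exactly, giving the upper bound $-(p-1)\alpha^{p/(p-1)}-(\nu-1)(H_\mx-\eps)\alpha+o(\alpha)$; letting $\eps\to 0$ concludes.

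\textbf{Lower bound.} For $u\in W^{1,p}(\Omega)$ a partition of unity restricts the essential contribution to $\Omega_\delta$ at $O(1)$ cost to the numerator; the interior $\Omega\setminus\Omega_\delta$ carries no boundary term and is easily dealt with. On $\Omega_\delta$, tubular coordinates and the pointwise bound $|\nabla u|^p\ge |\partial_t u|^p$ reduce the problem to a family of weighted one-dimensional Rayleigh quotients parameterised by $s\in\partial\Omega$. The heart of the lower bound is then the uniform 1D estimate
\[
\int_0^\delta |v'|^p J(s,t)\,\dd t-\alpha|v(0)|^p \ge \bigl[-(p-1)\alpha^{p/(p-1)}-(\nu-1)H(s)\alpha+o(\alpha)\bigr]\int_0^\delta|v|^p J(s,t)\,\dd t
\]
for all admissible $v$ and all $s\in\partial\Omega$; integrating in $s$ and using $-H(s)\ge -H_\mx$ completes the proof.

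\textbf{Main obstacle.} The principal difficulty is proving the uniform 1D estimate above. For $p=2$ the standard route is self-adjoint perturbation theory, which is unavailable for general $p$; my plan is purely variational. After the boundary-layer rescaling $t=\alpha^{-1/(p-1)}\tau$ the weight becomes $1-\alpha^{-1/(p-1)}\tau(\nu-1)H(s)+O(\alpha^{-2/(p-1)})$, a small perturbation of the half-line problem, uniform in $s$ by essential boundedness of the principal curvatures. The perturbed minimizer should then be compared to $\phi(\tau)=e^{-\tau}$ via a compactness/convergence argument, and pointwise convexity inequalities for $|\cdot|^p$ used to extract the first-order curvature correction. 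Peripheral technicalities (existence of minimizers via Proposition~\ref{prop-minimizer}, the appropriate artificial boundary condition at $t=\delta$, and the $O(1)$ cost of the partition of unity) are standard and only contribute $o(\alpha)$ errors.
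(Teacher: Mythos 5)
Your overall strategy---tubular coordinates, the 1D half-line model with minimizer $e^{-\alpha^{1/(p-1)}t}$, a separated trial function for the upper bound, and the pointwise bound $|\nabla u|^p\ge|\partial_t u|^p$ to reduce the lower bound to a family of weighted 1D Rayleigh quotients---is the same as the paper's, and the explicit computation showing $\int_0^\infty t(|\phi_\alpha'|^p-\mu^{1D}|\phi_\alpha|^p)\,\dd t\big/\int_0^\infty|\phi_\alpha|^p\,\dd t=\alpha$ is correct. However, there are two genuine gaps.

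\emph{Upper bound: the density-point issue.} You ``pick a point $s_0\in\partial\Omega$ inside a small coordinate patch on which $H\ge H_\mx-\eps$ essentially.'' This presumes that the super-level set $\{s: H(s)\ge H_\mx-\eps\}$ contains a nonempty open patch, which is false in general: under $C^{1,1}$ regularity the mean curvature is only $L^\infty$, $H_\mx$ is an \emph{essential} supremum, and the super-level set may have empty interior (e.g.\ a fat Cantor set). The paper circumvents this by choosing $s_0$ to be a Lebesgue density-one point of $S_\eps:=\{H\ge H_\mx-\eps\}$, using Riemann normal coordinates and a radially decreasing trial profile on a small ball to control the contribution of $B_\rho\setminus S_\eps$ to the Rayleigh quotient by a quantity $\gamma(\mu)$ that can be made arbitrarily small. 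Your argument does not address how to concentrate the angular cutoff $\chi$ on a set where $H$ is essentially maximal; without that, the curvature term you extract is an average of $H$ over $\operatorname{supp}\chi$, not $H_\mx-\eps$.

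\emph{Lower bound: the uniform 1D lemma.} You correctly identify the uniform 1D estimate as the crux. The paper establishes it (Lemma 5.1) by Agmon-type arguments: multiplying the 1D Euler--Lagrange equation by $f^p v$ with $f(t)=\chi(t)e^{\omega t}$, using Young's inequality, and deducing quantitative exponential decay of the minimizer that is uniform in $s$ because the constants depend only on $\|\kappa_j\|_\infty$; this then gives $v(0)^p\le p\|v'\varphi^{1/p}\|_p+M(s)+\mathcal{O}(\beta^{-1}\log\beta)$ and the lower bound follows by minimizing $x\mapsto x^p-p\alpha x$. Your proposed ``rescaling plus compactness/convergence'' is a different route and could plausibly work, but as sketched it leaves the essential uniformity in $s$ entirely unaddressed and gives no quantitative control of the error term. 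Since the compactness would have to be uniform over the $L^\infty$-bounded but non-compact family of weights $J(s,\cdot)$, this is exactly where the hard work lies, and a convexity-based comparison to $e^{-\tau}$ would still need a mechanism to control the boundary value $v(0)$ in terms of $\|v'\|_p$---which is precisely what Agmon-type decay supplies.

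A minor technicality: the trial function $\phi_\alpha(t)=e^{-\beta t}$ must be cut off near $t=\delta$ to satisfy the artificial Dirichlet condition there (the paper's $\psi(t)=e^{-\beta t}\chi(t)$); the resulting error is exponentially small and you gesture at this, but it should be stated.
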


\smallskip

\begin{rem}
Remark that the $C^{1,1}$ assumption is a minimal one to define
the curvature of the boundary. The asymptotics can be different for domains with a weaker regularity,
see e.g. Proposition~\ref{propa2} below for Lipschitz domains.

\end{rem}

\begin{rem} \label{rem-2}
A significant feature of our method of proving \eqref{eq-main} is that it does not assume existence of minimizers. This is important since there exist admissible domains for which problem \eqref{eq-inf} does not have a minimizer; for example if $p=2$ and $\Omega$ is an infinite cylinder with sufficiently smooth boundary, then it is easily seen that the infimum in \eqref{eq-inf} is not attained. 
\end{rem}

The proof of Theorem \ref{thm-main} is presented in Sections~\ref{sec2}--\ref{sec4}
and is organized as follows.
In Section~\ref{sec2} we estimate the eigenvalue $\Lambda(\Omega,p,\alpha)$ using some auxiliary operators
in a tubular neighborhood of $\partial\Omega$. In Section~\ref{sec3}
we obtain an upper bound by a suitable choice of test functions.
The lower bound is obtained in Section~\ref{sec4} using an analysis
of an auxiliary one-dimensional operator. 

In turns out that Theorem \ref{thm-main} has various applications to 
Sobolev boundary trace theorems, extension operators and isoperimetric inequalities. These are described 
in Section \ref{sec-appl}.

Apart from the asymptotic behavior of $\Lambda(\Omega,p,\alpha)$ it is natural to address
the question of the  behavior of the associated eigenfunction, in other words the minimizer of  \eqref{eq-inf}. 
Indeed, in \cite{dk} it was shown, for $p=2$, that as $\alpha\to \infty$, the eigenfunctions concentrate 
at the boundary of $\Omega$. In Section \ref{sec-eigenf} we carry this analysis, for a general $p>1$, a bit further. 
In particular, we prove an exponential localization
near the boundary, Theorem~\ref{prop-lip}, 
and a localization near the part of the boundary
at which the  mean curvature attains its maximum, Theorem~\ref{thm-locbd}.

Some explicitly solvable cases are discussed in Appendix~\ref{seca}.
In Appendix~\ref{secb} we prove an auxiliary elementary inequality used in the proofs, 
and in Appendix~\ref{appc} we show how the remainder estimate in Theorem~\ref{thm-main} can be improved 
under stronger regularity assumptions on $\partial\Omega$.

\section{\bf Applications} 
\label{sec-appl}

\subsection{\bf Best constants for boundary trace theorems}

The asymptotic expansion \eqref{eq-main} provides a number of consequences
for maps between various Sobolev spaces. Note first that a simple scaling argument gives
\begin{equation*}
\Lambda(\Omega,p,\mu^{p-1}\alpha)=\mu^{p} \Lambda(\mu \Omega,p,\alpha),
\quad
\mu>0.
\end{equation*}
In particular, as the half-space $\RR^{\nu-1}\cross\RR_+$
is invariant under dilations, the first
term on the right-hand side of  \eqref{eq-main} can be represented as
\begin{equation}
  \label{eq-half1}
\Lambda(\RR^{\nu-1}\cross\RR_+,p,\alpha)\equiv
-(p-1)\alpha^{\frac{p}{p-1}},  \quad \nu\ge 1,
\end{equation}
see also Appendix~\ref{seca}. Theorem~\ref{thm-main} thus admits the following version
for expanding domains:
\begin{cor} \label{cor1}
For any admissible domain $\Omega\subset\RR^\nu$, any $p\in(1,\infty)$ and
$\alpha>0$ one has, as $\mu$ tends to $+\infty$,
\[
\Lambda(\mu\Omega,p,\alpha)=\Lambda(\RR^{\nu-1}\cross\RR_+,p,\alpha) -(\nu-1)H_\mx(\Omega) \alpha \mu^{-1} +o(\mu^{-1}).
\]
\end{cor}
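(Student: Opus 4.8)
The plan is to deduce the statement directly from Theorem~\ref{thm-main} by exploiting the scaling identity $\Lambda(\Omega,p,\mu^{p-1}\alpha)=\mu^{p}\Lambda(\mu\Omega,p,\alpha)$ recorded just above the corollary. First I would rewrite this identity in the form
\[
\Lambda(\mu\Omega,p,\alpha)=\mu^{-p}\,\Lambda\big(\Omega,p,\mu^{p-1}\alpha\big),
\]
valid for every $\mu>0$, which converts the large-$\mu$ regime with $\alpha$ held fixed into the large-parameter regime governed by Theorem~\ref{thm-main}: since $p>1$ and $\alpha>0$, we have $\mu^{p-1}\alpha\to+\infty$ as $\mu\to+\infty$. (Here $\mu\Omega$ is again admissible, scaling preserving the $C^{1,1}$ property, turning the principal curvatures into $\mu^{-1}$ times those of $\Omega$, and replacing $\delta$ by $\mu\delta$ in condition (iii).)

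Next I would substitute the asymptotics \eqref{eq-main}, applied to $\Omega$ with $\alpha$ replaced by $\mu^{p-1}\alpha$. Using $(\mu^{p-1}\alpha)^{p/(p-1)}=\mu^{p}\alpha^{p/(p-1)}$ this yields
\[
\Lambda\big(\Omega,p,\mu^{p-1}\alpha\big)=-(p-1)\mu^{p}\alpha^{\frac{p}{p-1}}-(\nu-1)H_\mx(\Omega)\,\mu^{p-1}\alpha+o\big(\mu^{p-1}\big),
\]
where the remainder $o(\mu^{p-1}\alpha)$ furnished by Theorem~\ref{thm-main} is rewritten as $o(\mu^{p-1})$, legitimate because $\alpha$ is a fixed constant while $\mu\to+\infty$. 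Multiplying by $\mu^{-p}$ then gives
\[
\Lambda(\mu\Omega,p,\alpha)=-(p-1)\alpha^{\frac{p}{p-1}}-(\nu-1)H_\mx(\Omega)\,\alpha\,\mu^{-1}+o\big(\mu^{-1}\big).
\]

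Finally I would invoke the half-space identity \eqref{eq-half1}, namely $\Lambda(\RR^{\nu-1}\cross\RR_+,p,\alpha)=-(p-1)\alpha^{p/(p-1)}$, to replace the leading constant and arrive at precisely the asserted expansion. I do not anticipate any real obstacle: the whole argument is a change of variable in the parameter followed by careful bookkeeping of the error term, and the only point deserving an explicit word is the step $o(\mu^{p-1}\alpha)=o(\mu^{p-1})$, which is valid exactly because $\alpha$ remains fixed. As a consistency check one may note that $H_\mx(\mu\Omega)=\mu^{-1}H_\mx(\Omega)$, in agreement with the scaling of the second term on the right-hand side.
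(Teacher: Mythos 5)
Your proof is correct and follows exactly the route the paper intends: the scaling identity $\Lambda(\Omega,p,\mu^{p-1}\alpha)=\mu^{p}\Lambda(\mu\Omega,p,\alpha)$, Theorem~\ref{thm-main} applied at $\mu^{p-1}\alpha\to+\infty$, and the half-space identity \eqref{eq-half1} to recast the leading constant. The bookkeeping of the error term, $o(\mu^{p-1}\alpha)=o(\mu^{p-1})$ for fixed $\alpha$, is handled correctly.
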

Furthermore, one checks easily that the function $\RR_+\ni\alpha\mapsto \Lambda(\Omega,p,\alpha)$ is strictly decreasing and continuous 
with $\Lambda(\Omega,p,0)=0$ and $\lim_{\alpha\to+\infty}\Lambda(\Omega,p,\alpha)=-\infty$ and, hence, defines a bijection
between $\RR_+$ and $\RR_-$. Denote by $S(\Omega,p,q)$
the best constant in the trace embedding $W^{1,p}(\Omega) \hookrightarrow L^q(\partial\Omega)$, which is defined through
\[
\dfrac{1}{S(\Omega,p,q)} = \sup_{\substack{u\in W^{1,p}(\Omega)\\ u\not\equiv 0}}
\dfrac{\left(\dint_{\partial\Omega} |u|^q\, \dd\sigma\right)^{\frac pq}}{\dint_\Omega\big( |\nabla u|^p +|u|^p\big)\, \dd x}, \qquad 
1<q<p_*,
\]
with
\[
p_*:=\begin{cases}
\dfrac{p\nu-p}{\nu-p}, & \quad \text{ if} \quad p\in(1,\nu),\\
\infty, & \quad \text{ if} \quad p\in[\nu,\infty).
\end{cases}
\]
The value of $S(\Omega,p,p)$ is then uniquely determined by the implicit equation
\begin{equation}
   \label{eq-spinv}
\Lambda\big(\Omega,p,S(\Omega,p,p)\big)=-1.
\end{equation}
In particular, in view of \eqref{eq-half1} it holds
\[
S(\RR^{\nu-1}\cross\RR_+,p,p)=(p-1)^{\frac{1-p}{p}}, \quad \nu\ge1,
\]
see also~\cite[Lemmas~3.1 \and 3.3]{efk}. 
Various estimates for $S(\Omega,p,q)$ were extensively studied in the literature,
see e.g. the review~\cite{rossi}. In particular,
it was shown in \cite{pino} that
for any $q>2$ there exists a constant $\gamma=\gamma(q,\nu)>0$ independent of $\Omega$
with
\begin{equation}
     \label{eq-pino}
S(\mu\Omega,2,q) = S(\RR^{\nu-1}\cross\RR_+,2,q)- \gamma\, H_\mx (\Omega)\, \mu^{-1} +o(\mu^{-1}) \text{ for $\mu\to +\infty$.}
\end{equation}
Furthermore, by \cite[Theorem~1.3]{bonder}, for each $\Omega$
there exist positive constants $c_1$
and $c_2$ such that for $\mu\to+\infty$ there holds
\[
c_1 \mu^\kappa\le S(\mu\Omega,p,q)\le c_2\mu^\kappa,
\qquad
\kappa:=\begin{cases}
\dfrac{(q-p)(\nu-1)}{q}, &  \ \text{ if} \quad  1<q<p,\\
0, &  \ \text{ if} \quad  p\le q< p_*.
\end{cases}
\]
In particular, for any $p\in(1,\infty)$ the constant $S(\mu\Omega,p,p)$ remains uniformly bounded
and separated from $0$ as $\mu\to+\infty$.
The substitution of~Corollary~\ref{cor1} into~\eqref{eq-spinv}
gives the following improvement in the spirit of~\eqref{eq-pino}:
\begin{cor} \label{cor2}
For any admissible domain $\Omega\subset\RR^\nu$ and any $p\in(1,\infty)$  there holds
\begin{equation} \label{eq-S}
S(\mu\Omega,p,p) = S(\RR^{\nu-1}\cross\RR_+,p,p) -\frac{(p-1)^{\frac{2-p}{p}}\, (\nu-1)}{p}\, H_\mx(\Omega)\, \mu^{-1} + o(\mu^{-1})
\end{equation}  
as $\mu$ tends to $+\infty$.
\end{cor}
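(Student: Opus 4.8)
The plan is to invert the implicit relation \eqref{eq-spinv} for the dilated domain. Write $s_\mu:=S(\mu\Omega,p,p)$, so that $\Lambda(\mu\Omega,p,s_\mu)=-1$, and set $s_\infty:=S(\RR^{\nu-1}\cross\RR_+,p,p)=(p-1)^{\frac{1-p}{p}}$, which by \eqref{eq-half1} is the unique positive root of $-(p-1)t^{\frac{p}{p-1}}=-1$. From the two-sided bound of \cite[Theorem~1.3]{bonder} recalled above one knows that for $\mu$ large the numbers $s_\mu$ stay in a fixed compact interval $[m,M]\subset(0,\infty)$. The first --- and main --- step will be to upgrade Corollary~\ref{cor1} to an expansion that is uniform for $\alpha\in[m,M]$. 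I would obtain this by combining the scaling identity $\Lambda(\mu\Omega,p,\alpha)=\mu^{-p}\Lambda(\Omega,p,\alpha\mu^{p-1})$ with Theorem~\ref{thm-main}, written in the form $\Lambda(\Omega,p,t)=-(p-1)t^{\frac{p}{p-1}}-(\nu-1)H_\mx(\Omega)\,t+R(t)$ with $R(t)=o(t)$ as $t\to+\infty$: for $\alpha\in[m,M]$ this yields
\[
\Lambda(\mu\Omega,p,\alpha)=-(p-1)\alpha^{\frac{p}{p-1}}-(\nu-1)H_\mx(\Omega)\,\alpha\,\mu^{-1}+\mu^{-p}R(\alpha\mu^{p-1}),
\]
and since $\alpha\mu^{p-1}\ge m\mu^{p-1}\to+\infty$ while $\alpha\le M$, one gets $\sup_{\alpha\in[m,M]}|\mu^{-p}R(\alpha\mu^{p-1})|=o(\mu^{-1})$. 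In particular $\Lambda(\mu\Omega,p,\cdot)$ converges to $-(p-1)(\cdot)^{\frac{p}{p-1}}$ uniformly on $[m,M]$.

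Next I would deduce $s_\mu\to s_\infty$. Along any subsequence with $s_{\mu_k}\to\ell\in[m,M]$, the uniform convergence just established gives $-1=\Lambda(\mu_k\Omega,p,s_{\mu_k})\to-(p-1)\ell^{\frac{p}{p-1}}$, so $\ell=s_\infty$; boundedness of the family then forces $s_\mu\to s_\infty$.

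Finally I would linearise. Put $s_\mu=s_\infty+\delta_\mu$ with $\delta_\mu\to0$. Inserting this into the uniform expansion at $\alpha=s_\mu$, using $\Lambda(\mu\Omega,p,s_\mu)=-1$, the identity $-(p-1)s_\infty^{\frac{p}{p-1}}=-1$, the Taylor expansion $s_\mu^{\frac{p}{p-1}}=s_\infty^{\frac{p}{p-1}}+\frac{p}{p-1}s_\infty^{\frac{1}{p-1}}\delta_\mu+o(\delta_\mu)$ and $s_\mu\mu^{-1}=s_\infty\mu^{-1}+o(\mu^{-1})$, one arrives at
\[
0=-p\,s_\infty^{\frac{1}{p-1}}\delta_\mu+o(\delta_\mu)-(\nu-1)H_\mx(\Omega)\,s_\infty\,\mu^{-1}+o(\mu^{-1}).
\]
This first gives $\delta_\mu=O(\mu^{-1})$, hence $o(\delta_\mu)=o(\mu^{-1})$, and then
\[
\delta_\mu=-\frac{(\nu-1)H_\mx(\Omega)}{p}\,s_\infty^{\frac{p-2}{p-1}}\,\mu^{-1}+o(\mu^{-1})=-\frac{(p-1)^{\frac{2-p}{p}}(\nu-1)}{p}\,H_\mx(\Omega)\,\mu^{-1}+o(\mu^{-1}),
\]
using $s_\infty^{\frac{p-2}{p-1}}=(p-1)^{\frac{1-p}{p}\cdot\frac{p-2}{p-1}}=(p-1)^{\frac{2-p}{p}}$. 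Since $S(\mu\Omega,p,p)=s_\infty+\delta_\mu$, this is precisely \eqref{eq-S}.

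The hard part is the uniformity claim in the first paragraph; the scaling identity reduces it to the scalar fact $R(t)=o(t)$ from Theorem~\ref{thm-main}, so no new analysis is needed, but it does crucially rely on the a priori boundedness (and positivity) of $S(\mu\Omega,p,p)$ as $\mu\to+\infty$ in order to confine $s_\mu$ to a fixed compact set, which is why \cite{bonder} is invoked rather than reproved.
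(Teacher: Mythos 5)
Your proof is correct and follows the route the paper intends: invert the implicit equation \eqref{eq-spinv} via the scaling identity and the eigenvalue expansion of Theorem~\ref{thm-main}, then linearise about $s_\infty=(p-1)^{\frac{1-p}{p}}$. You also make explicit the local uniformity in $\alpha$ (obtained from the pointwise $o(t)$ bound via the scaling identity plus the a priori confinement of $S(\mu\Omega,p,p)$ to a compact interval away from $0$), which the paper's one-sentence proof of Corollary~\ref{cor2} passes over silently; this is a genuine and worthwhile addition, not a different method.
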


\subsection{\bf Extension operators}
Recall that a bounded linear operator
$E$ from $W^{1,p}(\Omega)$ to $W^{1,p}(\RR^\nu)$
is called an extension operator if $E f$ coincides with $f$ in $\Omega$
for any $f$. The existence and various estimates
for the extension operators in terms of $\Omega$
are of interest, see e.g.~\cite{bur}.
We will be concerned with the lower bound for the norms
\[
\cE(\Omega,p)=\inf\big\{ \|E\|: \quad E: W^{1,p}(\Omega)\mapsto W^{1,p}(\RR^\nu) \text{ is an extension operator}\big\}.
\]
It is known, in particular, that
\begin{equation}
     \label{eq-ewp}
\cE(\Omega,p)
\ge \bigg(
1+\dfrac{S(\Omega^c,p,p)}{S(\Omega,p,p)}
\bigg)^{\frac{1}{p}}, \quad \Omega^c:=\RR^\nu\setminus {\overline \Omega}.
\end{equation}
see~\cite[Theorem~3.1]{lotor}. Note that the work~\cite{lotor} deals formally
with the case $p=2$ only, but the proof holds literally for any~$p\in(1,\infty)$.
Remark also that 
\[
\cE(\RR^{\nu-1}\cross\RR_+,p)=2^\frac{1}{p}.
\]
In fact, the lower bound follows from \eqref{eq-ewp}, and it is attained
by the operator of extension by parity
$(E f)(x_1,\dots,x_{\nu-1},x_\nu):=f\big(x_1,\dots,x_{\nu-1},|x_\nu|\big)$.
We have the following result:
\begin{cor}\label{cor3}
Assume that both $\Omega$ and $\Omega^c$ are admissible domains in
$\RR^\nu$ and that $p\in(1,\infty)$,
then for $\mu\to+\infty$ there holds
\[
\cE(\mu\Omega,p)\ge
\cE(\RR^{\nu-1}\cross\RR_+,p)
 +\dfrac{(p-1)^\frac{1}{p}}{ 2^\frac{p-1}{p}\, p^2}\,(\nu-1) \Big(
H_\mx(\Omega)+H_\mn(\Omega)\Big) \mu^{-1} +o(\mu^{-1}),
\]
where $H_\mn(\Omega):=\mathop{\mathrm{ess\,inf}} H$.
\end{cor}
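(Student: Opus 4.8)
The plan is to feed Corollary~\ref{cor2} directly into the general lower bound \eqref{eq-ewp}. Applying \eqref{eq-ewp} to the dilated domain $\mu\Omega$ and using the elementary identity $(\mu\Omega)^c=\mu(\Omega^c)$ gives
\[
\cE(\mu\Omega,p)\ge\Big(1+\frac{S(\mu\Omega^c,p,p)}{S(\mu\Omega,p,p)}\Big)^{\frac1p},
\]
so it suffices to expand the right-hand side as $\mu\to+\infty$. Since both $\Omega$ and $\Omega^c$ are assumed admissible, Corollary~\ref{cor2} applies to each of them, and the leading-order term $S(\RR^{\nu-1}\cross\RR_+,p,p)=(p-1)^{\frac{1-p}{p}}$ is the same in both expansions.

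The only geometric input needed beyond Corollary~\ref{cor2} is the observation that $\partial\Omega^c=\partial\Omega$ as a set, while the outer unit normal of $\Omega^c$ is the inner unit normal of $\Omega$; hence the mean curvature of $\partial\Omega^c$ is the negative of that of $\partial\Omega$, so that $H_\mx(\Omega^c)=-H_\mn(\Omega)$. Writing $s_0:=(p-1)^{\frac{1-p}{p}}$ and $c:=\tfrac{(p-1)^{(2-p)/p}(\nu-1)}{p}$, Corollary~\ref{cor2} then gives
\[
S(\mu\Omega,p,p)=s_0-c\,H_\mx(\Omega)\,\mu^{-1}+o(\mu^{-1}),\qquad
S(\mu\Omega^c,p,p)=s_0+c\,H_\mn(\Omega)\,\mu^{-1}+o(\mu^{-1}).
\]

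It then remains to carry out a short asymptotic computation. Dividing the two expansions yields $1+\frac{S(\mu\Omega^c,p,p)}{S(\mu\Omega,p,p)}=2+\frac{c}{s_0}\big(H_\mx(\Omega)+H_\mn(\Omega)\big)\mu^{-1}+o(\mu^{-1})$, and raising this to the power $1/p$ via $(2+t)^{1/p}=2^{1/p}+\frac{2^{1/p}}{2p}t+o(t)$ produces $\cE(\RR^{\nu-1}\cross\RR_+,p)=2^{1/p}$ as the leading term together with the coefficient $\frac{2^{1/p}}{2p\,s_0}\,c$ in front of $(\nu-1)\big(H_\mx(\Omega)+H_\mn(\Omega)\big)\mu^{-1}$. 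Simplifying the exponent of $(p-1)$ (it collapses to $1/p$) and rewriting $2^{1/p}/2=2^{-(p-1)/p}$ gives exactly the constant $\frac{(p-1)^{1/p}}{2^{(p-1)/p}p^2}$ claimed in \eqref{eq-S}'s companion formula, which finishes the proof. There is no genuine obstacle here — the argument is a direct substitution — the only points requiring a moment of care are the sign change of the mean curvature under complementation and the remark that it is precisely the admissibility hypothesis on both $\Omega$ and $\Omega^c$ that makes Corollary~\ref{cor2} applicable to each of them.
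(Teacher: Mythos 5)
Your proposal is correct and follows precisely the route the paper takes: substitute the curvature sign-flip identity $H_\mathrm{max}(\Omega^c)=-H_\mathrm{min}(\Omega)$ into Corollary~\ref{cor2} for both $\Omega$ and $\Omega^c$, feed the two expansions into \eqref{eq-ewp}, and Taylor-expand; the paper's own proof is just a one-line version of this. (One cosmetic slip: you define $c$ to include the factor $(\nu-1)$ but then also display $(\nu-1)$ explicitly outside the coefficient $\tfrac{2^{1/p}}{2p\,s_0}c$, which double-counts it as written — the final constant you report is nonetheless correct.)
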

\begin{proof}
We have
$H_\mx(\Omega^c)=-H_\mn(\Omega)$, and the substitution into~\eqref{eq-S}
and then into \eqref{eq-ewp} gives the result.
\end{proof}

\subsection{Isoperimetric inequalities}

Numerous works studied isoperimetric inequalities
for the quantities $\Lambda(\Omega,2,\alpha)$ and $S(\Omega,2,2)$.
In particular, in \cite{bar} it was conjectured that the balls maximize
$\Lambda(\Omega,2,\alpha)$ among all fixed volume domains for any $\alpha>0$.
An analogous question for $S(\Omega,2,2)$ was asked e.g. in \cite{rossi2}.
The conjecture was supported e.g. by the consideration
of the first and second variations of the respective functionals
and by showing
that the balls are at least local minimizers, see e.g. \cite{rossi2,fnt}.
It was shown only recently in \cite{fk15} that the conjecture in the general
form is wrong
by comparing the eigenvalues of the balls with those for the spherical
shells for large $\alpha$, while it remains true at least in two dimensions
for a restricted range of positive~$\alpha$. (It is worth noting that the case
$\alpha<0$ is well understood for any $p$, see \cite{dai,bucur}.)
In fact, the conjecture appears to be closely related
to some estimates for the maximum mean curvature $H_\mx$
as discussed in \cite{pp15}, and the asymptotics \eqref{eq-main}
and \eqref{eq-S} allow us to include into consideration
all possible values of $p$. More precisely, let us
recall the following known results:
\begin{itemize}
\item[(A)] The balls \emph{do not} minimize the quantity $H_\mx$ among the bounded $C^{1,1}$ domains
having the same volume.
In particular, consider the domains
\begin{gather*}
B_\rho:=\big\{x\in\RR^\nu: |x|<\rho\big\},\quad
U_{r,R}:=\big\{ x\in\RR^\nu: r<|x|<R\big\}, \\
0<r<\rho<R, \quad R^\nu-r^\nu=\rho^\nu, \nonumber\\
\text{then }
\vol B_\rho=\vol U_{r,R}, \quad H_\mx(B_\rho)=\dfrac{1}{\rho}>\dfrac{1}{R}=H_\mx(U_{r,R}). \nonumber
\end{gather*}
\item[(B)] For $\nu=2$, the balls are the strict minimizers of $H_\mx$
among all bounded \emph{simply connected} $C^2$ domains
of a fixed area, see e.g. the discussion in~\cite{kp-curv}.
\item[(C)] For $\nu\ge 3$, the balls are the strict minimizers of $H_\mx$
among the bounded \emph{star-shaped} $C^2$ domains of the same volume,
see e.g. \cite[Theorem~2]{pp15}.
\item[(D)]
For $\nu=3$, the balls \emph{do not} minimize 
the quantity $H_\mx$ among the bounded $C^{1,1}$ domains
having the same volume \emph{and homeomorphic to a ball}. Moreover,
there is no strictly positive lower bound for $H_\mx$ in terms of the volume.
The respective examples were constructed recently in \cite{fnt2}.

\end{itemize}

The combination of (A)--(C) with Theorem~\ref{thm-main} gives the following observations, with an arbitrary
$p\in(1,\infty)$:

\begin{itemize}

\item The balls do not maximize $\Lambda(\Omega,p,\alpha)$ among the domains of a fixed volume.
In particular, for sufficiently large $\alpha>0$ there holds
$\Lambda(B_\rho,p,\alpha)<\Lambda(U_{r,R},p,\alpha)$.

\item Let $B\subset \RR^\nu$ be a ball and $\Omega\subset \RR^\nu$ be a simply connected
bounded $C^2$ domain of the same volume, and for $\nu\ge 3$ assume additionally that $\Omega$
is star-shaped, then there exists $\alpha_\Omega>0$ such that
$\Lambda(B,p,\alpha)\ge\Lambda(\Omega,p,\alpha)$ for $\alpha>\alpha_\Omega$,
with an equality iff $\Omega$ is a ball.

\item
At least for $\nu=3$, the balls do not maximize $\Lambda(\Omega,p,\alpha)$  among the domains
homeomorphic to balls and having a fixed volume.

\end{itemize}

In a similar way, Corollary~\ref{cor2} combined with (A)--(C) gives the following
assertions valid for any $p\in(1,\infty)$:

\begin{itemize}

\item The balls do not maximize $S(\Omega,p,p)$ among the domains of a fixed volume.
In particular, for sufficiently large $\mu>0$ there holds
$S(\mu B_\rho,p,p)<S(\mu U_{r,R},p,p)$.

\item Let $B\subset \RR^\nu$ be a ball and $\Omega\subset \RR^\nu$ be a simply connected
bounded $C^2$ domain of the same volume, and for $\nu\ge 3$, assume additionally that $\Omega$
is star-shaped, then there exists $\mu_\Omega>0$ such that
$S(\mu B,p,p)\ge S(\mu \Omega,p,p)$ for $\mu>\mu_\Omega$,
with an equality iff $\Omega$ is a ball.

\item
At least for $\nu=3$, the balls do not maximize $S(\Omega,p,p)$ among the domains
homeomorphic to balls and having a fixed volume.

\end{itemize}
In view of \eqref{eq-pino}, the same considerations hold for $S(\Omega,2,q)$ with any $q>2$.

\section{\bf Proof of theorem~\ref{thm-main}: Bracketing and a change of variables}\label{sec2}

The construction of this section is quite standard
and represents a suitable adaptation of \cite[Sections 2.2 and 2.3]{pp15}.
For $s\in S:=\partial\Omega$, let $n(s)$ be outer unit normal and
$L_s:=\dd n(s): T_s S\to T_s S$ be the shape operator, which is defined for almost all $s\in S$, see \cite[Section~3]{fu}.
Recall that the eigenvalues $\kappa_1(s)\le\dots \le\kappa_{\nu-1}(s)$ of $L_s$
are the so-called principal curvatures at $s$, and the mean curvature $H(s)$ at $s$ is defined by
\[
H(s)=\dfrac{\kappa_1(s)+\dots+\kappa_{\nu-1}(s)}{\nu-1}\equiv \dfrac{1}{\nu-1}\, \mathop{\mathrm{tr}} L_s.
\]
By assumptions, all $\kappa_j$ are essentially bounded, and the maximal mean curvature
\[
H_\mx\equiv H_\mx(\Omega):=\mathop{\text{ess\,sup}} H
\]
is well-defined. In what follows, it will be convenient to use
the quantities
\[
M(s):=(\nu-1) H(s), \quad
M_\mx:=(\nu-1)H_\mx.
\]

For $\delta>0$ denote
\begin{gather}
      \label{eq-td}
\Omega_\delta:=\big\{
x\in \Omega: \, \inf_{s\in S} |x-s|<\delta
\}, \quad
\Theta_\delta:=\Omega\setminus \overline{\Omega_\delta},\\
\begin{aligned}\Lambda^N(\Omega,p,\alpha)&:=\inf_{\substack{u\in W^{1,p}(\Omega_\delta\cup \Theta_\delta)\\u\not\equiv 0}}
\dfrac{\displaystyle \int_{\Omega_\delta \cup \Theta_\delta} |\nabla u|^p \dd x - \alpha\displaystyle\int_S |u|^p\dd\sigma}{\displaystyle\int_{\Omega_\delta \cup \Theta_\delta} |u|^p\dd x},\\
\Lambda^D(\Omega,p,\alpha)&:=\inf_{\substack{u\in W^{1,p}(\Omega)\\u=0 \text{ on } \partial\Theta_\delta\\u\not\equiv 0}}
\dfrac{\displaystyle \int_{\Omega} |\nabla u|^p \dd x - \alpha\displaystyle\int_S |u|^p\dd\sigma}{\displaystyle\int_{\Omega} |u|^p\dd x},
\end{aligned}\nonumber
\end{gather}
then one clearly has
$ \Lambda^N(\Omega,p,\alpha)\le \Lambda(\Omega,p,\alpha)\le \Lambda^D(\Omega,p,\alpha)$.
Furthermore, denote
\begin{align*}
\Lambda^{N,\delta}(\Omega,p,\alpha)&:=\inf_{\substack{u\in  W^{1,p}(\Omega_\delta)\\u\not\equiv 0}}
\dfrac{\displaystyle \int_{\Omega_\delta} |\nabla u|^p \dd x - \alpha\displaystyle\int_S |u|^p\dd\sigma}{\displaystyle\int_{\Omega_\delta} |u|^p\dd x},\\
\widetilde W^{1,p}(\Omega_\delta)&:= \big\{u\in W^{1,p}(\Omega_\delta): \, u=0 \text{ on } \partial\Omega_\delta\setminus S\big\},\\
\Lambda^{D,\delta}(\Omega,p,\alpha)&:=\inf_{\substack{u\in \widetilde W^{1,p}(\Omega_\delta)\\ u\not\equiv 0}}
\dfrac{\displaystyle \int_{\Omega_\delta} |\nabla u|^p \dd x - \alpha\displaystyle\int_S |u|^p\dd\sigma}{\displaystyle\int_{\Omega_\delta} |u|^p\dd x}.
\end{align*}
It is easy to check that if $\Lambda^{D,\delta}(\Omega,p,\alpha)\le 0$,
then
$\Lambda^{N}(\Omega,p,\alpha)=\Lambda^{N,\delta}(\Omega,p,\alpha)$ and
$\Lambda^{D}(\Omega,p,\alpha)=\Lambda^{D,\delta}(\Omega,p,\alpha)$,
hence,
\begin{equation*}
\Lambda^{N,\delta}(\Omega,p,\alpha)\le \Lambda(\Omega,p,\alpha)
\le \Lambda^{D,\delta}(\Omega,p,\alpha).
\end{equation*}

We will study the quantities $\Lambda^{N,\delta}(\Omega,p,\alpha) $and
$\Lambda^{D,\delta}(\Omega,p,\alpha)$ using a change of variables.
By assumption we can choose $\delta>0$ sufficiently small such that
the map
\begin{equation}
        \label{eq-st}
\Sigma:=S\times(0,\delta)\ni(s,t)\mapsto \Phi(s,t):=s-tn(s)\in \Omega_\delta
\end{equation}
is bijective and uniformly locally bi-Lipschitz.
The metric $G$ on  $\Sigma$ induced by this embedding is
\begin{equation}
      \label{eq-gg}
G=g\circ (I_s-tL_s)^2 + \dd t^2,
\end{equation}
where $I_s:T_s S\to T_s S$ is the identity map,
and $g$ is the metric on $S$ induced by the embedding in $\RR^\nu$.
The associated volume form $\dd\Sigma$ on $\Sigma$ is
\begin{equation*}
\dd\Sigma(s,t) =\big|\det G(s,t)\big|^\frac{1}{2}\dd s\, \dd t=\varphi(s,t)\big|\det g(s)\big|^\frac{1}{2} \dd s \,\dd t=\varphi(s,t) \, \dd \sigma(s) \,\dd t,
\end{equation*}
where
$\dd \sigma(s)=\big|\det g(s)\big|^\frac{1}{2}\,\dd s$
is the induced $(\nu-1)$-dimensional volume form on $S$, and the weight $\varphi$ is given by
\begin{equation}
      \label{eq-r1}
\varphi(s,t):=\big|\det (I_s-t L_s)\big|= \prod_{j=1}^{\nu-1} \big(1-\kappa_j(s) t \big)=1-M(s) t +P(s,t)t^2,
\end{equation}
where $P$ is a polynomial in $t$ with coefficients which are essentially bounded functions of $s$,
and we assume in addition that $\delta>0$ is sufficiently small
to have $1/2\le \varphi\le 2$ almost everywhere in $\Sigma$.
In particular, 
\begin{align*}
\Lambda^{N,\delta}(\Omega,p,\alpha)&=\inf_{\substack{u\in  W^{1,p}(\Omega_\delta)\\u\not\equiv 0}}
\dfrac{\displaystyle \int_{\Sigma} \big|(\nabla u)\circ \Phi(s,t)\big|^p \dd \Sigma(s,t) - \alpha\displaystyle\int_S \big|u\circ \Phi(s,0)\big|^p\dd\sigma(s)}{\displaystyle\int_{\Sigma} \big|u\circ \Phi(s,t)\big|^p\dd \Sigma(s,t)},\\
\Lambda^{D,\delta}(\Omega,p,\alpha)&=\inf_{\substack{u\in \widetilde W^{1,p}(\Omega_\delta)\\u\not\equiv 0}}
\dfrac{\displaystyle \int_{\Sigma} \big|(\nabla u)\circ \Phi(s,t)\big|^p \dd \Sigma(s,t) - \alpha\displaystyle\int_S \big|u\circ \Phi(s,0)\big|^p\dd\sigma(s)}{\displaystyle\int_{\Sigma} \big|u\circ \Phi(s,t)\big|^p\dd \Sigma(s,t)},
\end{align*}
and the map $u\mapsto u\circ \Phi$ defines a bijection
between $W^{1,p}(\Omega_\delta)$ and $W^{1,p}(\Sigma)$
as well as between $\widetilde W^{1,p}(\Omega_\delta)$ and
$\widetilde W^{1,p}(\Sigma):=\big\{v\in W^{1,p}(\Sigma): \, v(\cdot,\delta)=0\big\}$.
Furthermore, for $v=u\circ \Phi$ we have
$\big| (\nabla u )\circ \Phi\big|^2
=G^{jk} \partial_j v \partial_k v$ with $(G^{jk}):=G^{-1}$,
and due to \eqref{eq-gg} we can estimate, with some $0<C_-<C_+$,
\[
C_- g^{-1} + \dd t^2\le G^{-1}\le C_+ g^{-1} + \dd t^2, \quad (g^{\rho\mu}):= g^{-1},
\]
which gives
$ C_- |\nabla_s v|^2 + |\partial_t v|^2
\le
\big| (\nabla u )\circ \Phi\big|^2
\le
C_+ |\nabla_s v|^2+ |\partial_t v|^2$ with
$|\nabla_s v|^2:=g^{\rho \mu} \partial_\rho v \partial_\mu v$.
Hence, with the notation 
\[
\Lambda^-(p,\alpha) :=\inf_{\substack{u\in W^{1,p}(\Sigma)\\u\not\equiv 0}}
\dfrac{\displaystyle \int_{\Sigma} \Big| C_- \big|\nabla_s u(s,t)\big|^2 + u_t(s,t)^2\Big|^{\frac p 2} \varphi(s,t) \dd\sigma(s)\dd t-\alpha \int_S \big|u(s,0)\big|^p\,\dd\sigma(s)}{\displaystyle\int_\Sigma \big|u(s,t)\big|^p \varphi(s,t)\dd\sigma(s)\dd t}
\]
and
\[
\Lambda^+(p,\alpha) := 
\inf_{\substack{u\in \widetilde W^{1,p}(\Sigma)\\u\not\equiv 0}}
\dfrac{\displaystyle \int_{\Sigma} \Big| C_+ \big|\nabla_s u(s,t)\big|^2 + u_t(s,t)^2\Big|^{\frac p 2} \varphi(s,t) \dd\sigma(s)\dd t-\alpha \int_S \big|u(s,0)\big|^p\,\dd\sigma(s)}{\displaystyle\int_\Sigma \big|u(s,t)\big|^p \varphi(s,t)\dd\sigma(s)\dd t}\, ,
\]
we conclude that 
\begin{equation} \label{DN}
\Lambda^-(p,\alpha)\ \le\  \Lambda(\Omega,p,\alpha)
\ \le\  \Lambda^+(p,\alpha)
\end{equation}
holds true provided $\Lambda^+(p,\alpha)\le 0$. Now we obtain separately
an upper bound for $\Lambda^+(p,\alpha)$, see Section~\ref{sec3},
and a lower bound for $\Lambda^-(p,\alpha)$, see Section~\ref{sec4}.

\section{\bf Proof of theorem~\ref{thm-main}: Upper bound}\label{sec3}

For an upper bound for $\Lambda^+(p,\alpha)$
we will test on functions of a special form.
To have shorter expressions we introduce the parameter
\begin{equation}
         \label{eq-beta}
\beta:=\alpha^{\frac{1}{p-1}}.
\end{equation}
Pick a $C^\infty$ function $\chi:(0,\delta)\to [0,1]$
which equals one in a neighborhood of $0$ and zero in a neighborhood of $\delta$, and define  
$\psi(t):=e^{-\beta t} \chi(t)$.
As $\beta$ tends to $+\infty$,
\begin{equation}
      \label{eq-phit}
\begin{aligned}
\int_0^\delta \psi(t)^p \varphi(s,t)\dd t&=\dfrac{1}{p\beta}-\dfrac{M(s)}{p^2\beta^2} + \cO \Big( \dfrac{1}{\beta^3}\Big),\\
\int_0^\delta \big|\psi'(t)\big|^p \varphi(s,t)\dd t&=\beta^p\bigg(\dfrac{1}{p\beta}-\dfrac{M(s)}{p^2\beta^2} + \cO \Big( \dfrac{1}{\beta^3}\Big)\bigg),
\end{aligned}
\end{equation}
where the remainder estimates are uniform in $s\in S$ due to the essential boundedness of the coefficients of $\varphi$.
Now we are going to consider two cases separately.

\subsection*{Case I: $p\in (1,2]$}
Using the inequality
$(a+b)^q\le a^q + b^q$ valid for $a,b\ge 0$ and $q\in (0,1]$
we estimate, with some $C>0$,
\begin{align*}
\Lambda^+(p,\alpha) & \le
\inf_{\substack{u\in \widetilde W^{1,p}(\Sigma)\\u\not\equiv 0}}
\bigg\{\displaystyle \int_{\Sigma} \Big( C\big|\nabla_s u(s,t)\big|^p + \big|u_t(s,t)\big|^p\Big) \varphi(s,t) \dd\sigma(s)\dd t
-\beta^{p-1} \int_S \big|u(s,0)\big|^p\dd\sigma(s)\bigg\}\\ &\qquad\qquad\quad \times \bigg\{
\displaystyle\int_\Sigma \big|u(s,t)\big|^p \varphi(s,t)\dd\sigma(s)\dd t\bigg\}^{-1}.
\end{align*}
Considering the functions $u$ of the form $u(s,t)=v(s)\psi(t)$ with $v\in W^{1,p}(S)$ and using
the estimates \eqref{eq-phit} we arrive, as $\beta\to+\infty$, at
\begin{multline}
    \label{eq-lplus}
\Lambda^+(p,\alpha)\le
\inf_{\substack{v\in W^{1,p}(S)\\v\not\equiv 0}} \Bigg\{
C \int_S \big|\nabla_s v(s)\big|^p \Big[ 
\dfrac{1}{p\beta}-\dfrac{M(s)}{p^2\beta^2} + \cO \Big( \dfrac{1}{\beta^3}\Big)
\Big]\dd\sigma(s)\\
+ \beta^p \int_S
\big|v(s)\big|^p \Big[ 
\dfrac{1}{p\beta}-\dfrac{M(s)}{p^2\beta^2} + \cO \Big( \dfrac{1}{\beta^3}\Big)
\Big]\dd\sigma(s)
-\beta^{p-1} \int_S \big|v(s)\big|^p \dd\sigma(s)
\Bigg\}\\
\times
\Bigg\{
\int_S
\big|v(s)\big|^p \Big[ 
\dfrac{1}{p\beta}-\dfrac{M(s)}{p^2\beta^2} + \cO \Big( \dfrac{1}{\beta^3}\Big)
\Big]\dd\sigma(s)
\Bigg\}^{-1},
\end{multline}
where the $\cO$-terms are uniform in $s\in S$ and do not depend on $v$.

To construct a suitable function $v$, we adapt the procedure appearing e.g. in \cite[Lemma~3.2]{bel} for Schr\"odinger operators with strong potentials.
Take an arbitrary $\varepsilon>0$. By assumption, the set
\[
S_\varepsilon:=\big\{s\in S: M_\mx-\varepsilon\le M(s)\le M_\mx\big\}
\]
has a non-zero measure, and almost any point $s$ of $S_\varepsilon$
has density one with respect to the Lebesgue measure, i.e. in our case $\sigma\big(\cB_\rho(s)\cap S_\varepsilon\big)/\sigma\big(\cB_\rho(s)\big)\to 1$
for $\rho\to 0$, where $\cB_\rho(s)$ is the geodesic ball in $S$ centered at $s$ of radius $\rho$,  see e.g. \cite[Section 1.7, Corollary 3]{ev}. Let us choose any $s\in S_\varepsilon$
with this property. In what follows, we denote by $B_r$ the ball of radius $r$ center at $0$ in $\RR^{\nu-1}$.
Let $y\in B_r$ be the Riemann normal coordinates centered at $s$, which will be used as local coordinates on $S$,
then for any $v\in W^{1,p}_0(B_r)$,
$v\not\equiv 0$, due to \eqref{eq-lplus} one has,
\begin{equation}   \label{eq-lpl2}
\Lambda^+(p,\alpha)\le
\dfrac{c\dint_{B_r}\big|\nabla v(y)\big|^p\dd \sigma(y)
+\beta^p \dint_{B_r} \big|v(y)\big|^p \Big[ (1-p) -\dfrac{M(y)}{p\beta}+\cO\Big(\dfrac{1}{\beta^2}\Big)\Big]
\dd\sigma(y)}{
\dint_{B_r} \big|v(y)\big|^p \Big[ 1 -\dfrac{M(y)}{p\beta}+\cO\Big(\dfrac{1}{\beta^2}\Big)\Big]
\dd\sigma(y)}.
\end{equation}
Assuming that $r$ is sufficiently small we have
$2^{-1}\dd y \le d\sigma(y) \le 2 \dd y$ in $B_r$.
Furthermore, due to the choice of $s$ we have
$\meas(B_\rho\cap S_\varepsilon)/\meas(B_\rho)\to 1$ as $\rho\to 0$, where $\meas$
stands for the Lebesgue measure in $\RR^{\nu-1}$.
Let $\mu\in (0,1/2)$, then  for sufficiently small
$\rho \in(0,r)$ one has $\meas(B_\rho\cap S_\varepsilon)\ge(1-\mu) \meas(B_\rho)$.
Denote 
$$
\theta\equiv \theta(\varepsilon,\rho):=
\meas(B_\rho\setminus S_\varepsilon)/\meas(B_\rho), 
$$
so that $0\le \theta\le \mu$. For a ball $\omega\subset \RR^{\nu-1}$, let $\Psi_\omega$ denote
a non-negative minimizer of
\begin{equation}
   \label{eq-pball}
\lambda_\omega:=
\inf \Big\{\int_\omega |\nabla f|^p \dd y: \, f\in W^{1,p}_0(\omega),  \ \int_\omega | f|^p \dd y= 1\Big\}.
\end{equation}
By~\cite{rad}, the function $\Psi_\omega$ is uniquely defined, and it is
radially decreasing. 
Furthermore,  set $\Phi_\omega=c_\omega \Psi_\omega$ with $c_\omega>0$
chosen such that
\[
\int_\omega \Phi_\omega(y)^p \dd \sigma(y)=1.
\]
Due to the above computations one has 
 $c_\omega\in (2^{-\frac{1}{p}},2^{\frac{1}{p}})$.
We are going to test in \eqref{eq-lpl2} on
\begin{equation}
    \label{eq-v}
v=\Phi_{B_\rho} \text{ extended by zero to} \ B_r. 
\end{equation}
Set $\eta:=\theta^{\frac{1}{\nu-1}}$, then $\meas (B_\rho\setminus S_\varepsilon)= \meas(B_{\eta\rho})$. As $\Phi_{B_\rho}$ is radially decreasing, we have
\[
\int_{B_\rho\setminus S_\varepsilon}
\Phi^p_{B_\rho} (y) \dd\sigma(y)\le
2
\int_{B_\rho\setminus S_\varepsilon}
\Phi^p_{B_\rho} (y) \dd y
\le
2\int_{B_{\eta\rho}}
\Phi^p_{B_\rho} (y) \dd y
\le
4 \int_{B_{\eta\rho}}
\Psi^p_{B_\rho} (y) \dd y.
\]
Furthermore, using the inequality $\eta\le \mu^{\frac{1}{\nu-1}}$
and the equality $\Psi_{B_\rho}(y)=\rho^{\frac{1-\nu}{p}} \Psi_{B_1}(y/\rho)$,
we have
\[
\int_{B_{\eta\rho}}
\Psi^p_{B_\rho} (y) \dd y
\le
\int_{B_{\mu^{\frac{1}{\nu-1}}\rho}}
\Psi^p_{B_\rho} (y) \dd y\\
=\rho^{1-\nu}\int_{B_{\mu^{\frac{1}{\nu-1}}\rho}}
\Psi^p_{B_1}(y/\rho)\dd y
=\int_{B_{\mu^{\frac{1}{\nu-1}}}}\Psi^p_{B_1}(y)\dd y
=:\gamma(\mu).
\]
Putting all together, for an aribitrarily chosen $\mu\in(0,1/2)$
we can take $\rho$ sufficiently small and make the choice \eqref{eq-v},
which gives
\[
\int_{B_\rho\setminus S_\varepsilon}
v(y)^p \dd y \le \gamma(\mu) = o(1), \qquad \mu\to 0. 
\]
Furthermore, using the fact that $\rho$ was chosen small but fixed
and that $p\le 2$, we have, with suitable $C_j>0$ and $\beta$ large enough
\begin{align*}
A &:=
c\dint_{B_r}\big|\nabla v(y)\big|^p\dd \sigma(y)
+\beta^p \dint_{B_r} v(y)^p \Big( (1-p) -\dfrac{M(y)}{p\beta}+\cO\big(\dfrac{1}{\beta^2}\big)\Big)
\dd\sigma(y)\\
&=c \lambda_{B_\rho}+(1-p)\beta^p
-\dfrac{M_\mx}{p} \beta^{p-1}
+ \dfrac{\beta^{p-1}}{p}\dint_{B_r} \big(M_\mx-M(y)\big)v(y)^p\dd\sigma(y)+C_1\beta^{p-2}\\
&=c \lambda_{B_\rho}+
(1-p)\beta^p -\dfrac{M_\mx}{p} \beta^{p-1}
+\dfrac{\beta^{p-1}}{p}\dint_{B_r\cap S_\varepsilon} \big(M_\mx-M(y)\big)
v(y)^p\dd\sigma(y)\\
&\qquad
+\dfrac{\beta^{p-1}}{p}\dint_{B_r\setminus S_\varepsilon} \big(M_\mx-M(y)\big)v(y)^p\dd\sigma(y)+C_1\beta^{p-2}\\
&\le
(1-p)\beta^p  -\dfrac{M_\mx}{p} \beta^{p-1}
+\dfrac{\varepsilon}{p}\beta^{p-1}\dint_{B_r\cap S_\varepsilon} v(y)^p\dd\sigma(y)\\
&\qquad{} +\dfrac{\|M_\mx-M\|_\infty}{p}\beta^{p-1}\!\!\dint_{B_r\setminus S_\varepsilon} v(y)^p\dd\sigma(y)
+C_2\\
&\le
(1-p)\beta^p  -\dfrac{M_\mx}{p} \beta^{p-1}
+\dfrac{\varepsilon}{p}\beta^{p-1}+
\dfrac{\|M_\mx-M\|_\infty \gamma(\mu)}{p}\beta^{p-1}
+C_2\\
&=(1-p)\beta^p - \dfrac{1}{p}\Big( M_\mx
-\varepsilon-
\|M_\mx-M\|_\infty \gamma(\mu)\Big)\beta^{p-1}
+C_2.
\end{align*}
In the same way we obtain
\begin{align*}
B&:=\dint_{B_r} \big|v(y)\big|^p \Big[ 1 -\dfrac{M(y)}{p\beta}+\cO\Big(\dfrac{1}{\beta^2}\Big)\Big]
\dd\sigma(y)\\
&=1 -\dfrac{M_\mx}{p} \beta^{-1}
+ \dfrac{1}{p} \beta^{-1}\dint_{B_r} \big(M_\mx-M(y)\big)v(y)^p\dd\sigma(y)+\cO(\beta^{-2})\\
&\le 1 -\dfrac{1}{p}\,\Big(M_\mx
-\varepsilon
-\|M_\mx-M\|_\infty \gamma(\mu)\Big)\beta^{-1}
+C_2 \beta^{-2}.
\end{align*}
For large $\beta$ one has $A<0$, and by \eqref{eq-lpl2} 
\begin{align*}
\Lambda^+(p,\alpha)\, & \le\, \dfrac{A}{B}\, \le\,  \dfrac{(1-p)\beta^p - \dfrac{1}{p}\Big( M_\mx
-\varepsilon-
\|M_\mx-M\|_\infty \gamma(\mu)\Big)\beta^{p-1}
+C_1}{1 -\dfrac{1}{p}\,\Big(M_\mx
-\varepsilon
-\|M_\mx-M\|_\infty \gamma(\mu)\Big)\beta^{-1}
+C_2 \beta^{-2}}\\
& \le
(1-p)\beta^p - \Big( M_\mx
-\varepsilon-
\|M_\mx-M\|_\infty \gamma(\mu)\Big)\beta^{p-1}
+C_3 \\
& =(1-p)\beta^p -M_\mx \beta^{p-1}
+\Big(\varepsilon+\|M_\mx-M\|_\infty \gamma(\mu)\Big)\beta^{p-1} + C_3,
\end{align*}
It follows that
\begin{align*}
\limsup_{\alpha\to+\infty} \dfrac{\Lambda(\Omega,p,\alpha) -(1-p)\alpha^{\frac{p}{p-1}}+M_\mx \alpha}{\alpha}
& \le
\limsup_{\alpha\to+\infty} \dfrac{\Lambda^+(p,\alpha) -(1-p)\alpha^{\frac{p}{p-1}}+M_\mx \alpha}{\alpha}\\
& =
\limsup_{\beta\to+\infty}
\dfrac{\Lambda^+(p, \alpha) -(1-p)\beta^p+M_\mx \beta^{p-1}}{\beta^{p-1}}\\
& \le \varepsilon+\|M_\mx-M\|_\infty \gamma(\mu).
\end{align*}
As $\varepsilon>0$ is arbitrary and $\gamma(\mu)$ can be made arbitrary small
by taking $\mu$ arbitrary small, we have
the sought estimate $\Lambda(\Omega,p,\alpha)\le (1-p)\alpha^{\frac{p}{p-1}}-M_\mx \alpha
+o(\alpha)$ for large $\alpha$.

\subsection*{Case II: $p\in (2,\infty)$}

Let $q>1$, then one can find a constant $c>0$ such that for all $\varepsilon_0\in (0,1)$
and all $a,b\ge 0$ there holds
\begin{equation}
         \label{eq-ineq}
(a+b)^q\le (1+\varepsilon_0)a^q + \dfrac{c}{\varepsilon_0^{q-1}}\,b^q,
\end{equation}
see e.g. Appendix~\ref{secb}.
Therefore, with a suitable $C>0$ and any $\varepsilon\in(0,1)$ one can estimate
\begin{align*}
\Lambda^+(p,\alpha)& \le
\inf_{\substack{u\in \widetilde W^{1,p}(\Sigma)\\u\not\equiv 0}}
\bigg\{\displaystyle \int_{\Sigma} \Big\{ C \varepsilon_0^\frac{2-p}{2}\big|\nabla_s u(s,t)\big|^p + (1+\varepsilon_0)\big|u_t(s,t)\big|^p\Big\} \varphi(s,t) \dd\sigma(s)\dd t\\
& \qquad\qquad\qquad -\beta^{p-1} \int_S \big|u(s,0)\big|^p\dd\sigma(s)\bigg\}\times
\bigg\{\displaystyle\int_\Sigma \big|u(s,t)\big|^p \varphi(s,t)\dd\sigma(s)\dd t\bigg\}^{-1}.
\end{align*}
Considering the functions $u$ of the form $u(s,t)=v(s)\phi(t)$ with $v\in W^{1,p}(S)$ and using
the estimates \eqref{eq-phit} we arrive, as $\beta\to+\infty$, at
\begin{multline}
  \label{eq-lam2}
\Lambda^+(p,\alpha)\le
\inf_{\substack{v\in W^{1,p}(S)\\v\not\equiv 0}} \Bigg\{
C \varepsilon_0^\frac{2-p}{2}\int_S \big|\nabla_s v(s)\big|^p \Big[ 
\dfrac{1}{p\beta}-\dfrac{M(s)}{p^2\beta^2} + \cO \Big( \dfrac{1}{\beta^3}\Big)
\Big]\dd\sigma(s)\\
+ (1+\varepsilon_0)\beta^p \int_S
\big|v(s)\big|^p \Big[ 
\dfrac{1}{p\beta}-\dfrac{M(s)}{p^2\beta^2} + \cO \Big( \dfrac{1}{\beta^3}\Big)
\Big]\dd\sigma(s)
-\beta^{p-1} \int_S \big|v(s)\big|^p \dd\sigma(s)
\Bigg\}\\
\times
\Bigg\{
\int_S
\big|v(s)\big|^p \Big[ 
\dfrac{1}{p\beta}-\dfrac{M(s)}{p^2\beta^2} + \cO \Big( \dfrac{1}{\beta^3}\Big)
\Big]\dd\sigma(s)
\Bigg\}^{-1},
\end{multline}
where the $\cO$-terms are uniform in $s\in S$ and do not depend on $v$ and $\varepsilon_0$,
and by taking $\varepsilon_0:=\beta^{-\frac{3}{2}}$ and choosing suitable $C_j>0$
we arrive at
\[
\Lambda^+(p,\alpha)
\le
\dfrac{C_1 \beta^{\frac{3(p-2)}{4}} \dint_S \big|\nabla_s v(s)\big|^p\,\dd\sigma(s)
+ \beta^p
 \dint_S \Big[(1-p) -\dfrac{M(s)}{p\beta} +\cO\Big(\dfrac{1}{\beta^{\frac{3}{2}}}\Big) \Big]\big|v(s)\big|^p\,\dd\sigma(s)
}{\dint_S
\big|v(s)\big|^p \Big[ 
1-\dfrac{M(s)}{p\beta} + \cO \Big( \dfrac{1}{\beta^2}\Big)
\Big]\dd\sigma(s)}.
\]
Now using the same notation and the same test function as in the case I we arrive at
\begin{align*}
\Lambda^+(p,\alpha)
& \le \dfrac{(1-p)\beta^p - \dfrac{1}{p}\Big( M_\mx
-\varepsilon-
\|M_\mx-M\|_\infty \gamma(\mu)\Big)\beta^{p-1}
+C_1\beta^{p-\frac{3}{2}}}{1 -\dfrac{1}{p}\,\Big(M_\mx
-\varepsilon
-\|M_\mx-M\|_\infty \gamma(\mu)\Big)\beta^{-1}
+C_2 \beta^{-2}}\\
& \le(1-p)\beta^p -M_\mx \beta^{p-1}
+\Big(\varepsilon+\|M_\mx-M\|_\infty \gamma(\mu)\Big)\beta^{p-1} + C_3 \beta^{p-\frac{3}{2}},
\end{align*}
where $C_j>0$ are suitable constants, and 
\[
\limsup_{\alpha\to+\infty} \dfrac{\Lambda(\Omega,p,\alpha) -(1-p)\alpha^{\frac{p}{p-1}}+M_\mx \alpha}{\alpha}
\le
\varepsilon+\|M_\mx-M\|_\infty \gamma(\mu),
\]
while $\varepsilon$ and $\gamma(\mu)$ can be chosen arbitrarily small, which gives the result.

\section{\bf Proof of theorem~\ref{thm-main}: Lower bound}\label{sec4}

\noindent

 The minoration $C_- \big|\nabla_s u(s,t)\big|^2 + u_t(s,t)^2\ge u_t(s,t)^2$
gives
\begin{equation}
      \label{eq-lest}
\Lambda^-(p,\alpha)\ge 
\inf_{\substack{u\in W^{1,p}(\Sigma)\\u\not\equiv 0}}
\dfrac{\displaystyle \int_S  \bigg( \int_0^\delta \big|u_t(s,t)\big|^p \varphi(s,t) \dd t -  \alpha \big|u(s,0)\big|^p \bigg) \dd\sigma(s)}{\displaystyle\int_\Sigma \big|u(s,t)\big|^p \varphi(s,t)\dd\sigma(s)\dd t}.
\end{equation}
Denote
\begin{equation}
    \label{eq-1d}
\lambda(\alpha,p,s):=
\inf_{u \in W^{1,p}(0,\delta)} \dfrac{\displaystyle \int_0^\delta \big|u'(t)\big|^p \varphi(s,t)\,\dd t - \alpha \big| u(0)\big|^p}{\displaystyle \int_0^\delta \big|u(t)\big|^p \varphi(s,t)\,\dd t}\, ,
\end{equation}
then for a.e. $s\in S$ one has
\[
\int_0^\delta \big|u_t(s,t)\big|^p \varphi(s,t) \dd t -  \alpha \big|u(s,0)\big|^p
\ge
\lambda(\alpha,p,s)
\int_0^\delta \big|u(s,t)\big|^p \varphi(s,t) \dd t,
\]
and \eqref{eq-lest} implies
\[
\Lambda^-(p,\alpha)\ge 
\inf_{\substack{u\in W^{1,p}(\Sigma)\\u\not\equiv 0}}
\dfrac{\displaystyle \int_\Sigma \lambda(\alpha,p,s)\big|u(s,t)\big|^p \varphi(s,t)\dd\sigma(s)\dd t}{\displaystyle\int_\Sigma \big|u(s,t)\big|^p \varphi(s,t)\dd\sigma(s)\dd t}
\ge
\mathop{\mathrm{ess\,inf}}_{s\in S} \lambda(\alpha,p,s).
\]
Hence the result follows from the following lemma:

\begin{lemma}\label{lemin}
There holds
\[
\lambda\equiv\lambda(\alpha,p,s)=(1-p)\alpha^{\frac{p}{p-1}}-M(s)\alpha+\cO(\alpha^{\frac{p-2}{p-1}}\log \alpha)
\quad \text{ as} \quad  \alpha\to +\infty,
\]
where the remainder estimate is uniform in $s$ outside a zero-measure set.
\end{lemma}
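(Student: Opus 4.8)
The plan is to prove the matching upper and lower bounds for $\lambda\equiv\lambda(\alpha,p,s)$ separately, writing $M:=M(s)$, $\varphi(\cdot):=\varphi(s,\cdot)$ and using the scaling parameter $\beta:=\alpha^{\frac1{p-1}}$, so that the assertion reads $\lambda=-(p-1)\beta^{p}-M\beta^{p-1}+\cO(\beta^{p-2}\log\beta)$. The only properties of the weight that will be used are $\varphi(0)=1$, $1/2\le\varphi\le2$, and the first-order estimate $|\varphi'(t)+M\varphi(t)|\le K t$ on $(0,\delta)$, which follows from the expansion $\varphi=1-Mt+P(s,t)t^{2}$ in \eqref{eq-r1} upon differentiating; the constant $K$ is \emph{uniform in $s$} because the coefficients of $P$ are essentially bounded, and this is exactly what yields uniformity of the remainder. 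For the upper bound one tests in \eqref{eq-1d} with $u(t)=e^{-\beta t}$ times the cut-off $\chi$ of Section~\ref{sec3} (which perturbs every quantity by $\cO(e^{-c\beta})$ only); inserting \eqref{eq-phit} and $\alpha=\beta^{p-1}$, the numerator of the Rayleigh quotient equals $-\tfrac{p-1}{p}\beta^{p-1}-\tfrac{M}{p^{2}}\beta^{p-2}+\cO(\beta^{p-3})$ and the denominator equals $\tfrac1{p\beta}-\tfrac{M}{p^{2}\beta^{2}}+\cO(\beta^{-3})$, whence $\lambda\le-(p-1)\beta^{p}-M\beta^{p-1}+\cO(\beta^{p-2})$ uniformly in $s$. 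This already beats the claimed remainder, so all the work is in the lower bound.

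For the lower bound, fix the target $\Lambda_0:=-(p-1)\beta^{p}-M\beta^{p-1}-R$ with $R:=C\beta^{p-2}\log\beta$ ($C$ large, to be chosen), and set $\ell:=R/(4K\beta^{p-1})=C\log\beta/(4K\beta)$. We must show $\int_0^\delta|u'|^{p}\varphi\,\dd t-\alpha|u(0)|^{p}\ge\Lambda_0\int_0^\delta|u|^{p}\varphi\,\dd t$ for all $u\in W^{1,p}(0,\delta)$. Since $-\Lambda_0>0$ for $\alpha$ large and $\varphi>0$, the contribution of $(\ell,\delta)$ to the difference of the two sides is nonnegative, so it suffices to prove the same inequality with both integrals restricted to $(0,\ell)$, i.e.\ $\lambda^{(0,\ell)}\ge\Lambda_0$, where $\lambda^{(0,\ell)}$ is the analogue of \eqref{eq-1d} on the short interval $(0,\ell)$. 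There the infimum is attained by some $u_0>0$; since $\Lambda_0<0$ forces $(\varphi|u_0'|^{p-2}u_0')'>0$ together with the natural Neumann condition at $\ell$, the function $u_0$ is non-increasing and solves the Euler--Lagrange equation on $(0,\ell)$.

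The core step is a Picone / ground-state substitution with $v(t):=e^{-\beta t}$, which satisfies the Robin condition $-|v'(0)|^{p-2}v'(0)=\alpha\,v(0)^{p-1}$ \emph{exactly}. Multiplying the pointwise Picone inequality $|u_0'|^{p}\ge|v'|^{p-2}v'\,(u_0^{p}/v^{p-1})'$ by $\varphi$, integrating over $(0,\ell)$ and integrating by parts, the boundary term at $0$ cancels $-\alpha u_0(0)^{p}$ by the choice of $\beta$, and one is left with
\[
\int_0^\ell|u_0'|^{p}\varphi\,\dd t-\alpha u_0(0)^{p}\ \ge\ \Lambda_0\int_0^\ell u_0^{p}\varphi\,\dd t\ -\ \varphi(\ell)\beta^{p-1}u_0(\ell)^{p},
\]
\emph{provided} $v$ is a supersolution on $(0,\ell)$, i.e.\ $-(\varphi|v'|^{p-2}v')'\ge\Lambda_0\,\varphi\,v^{p-1}$ there. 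A direct computation gives
\[
-(\varphi|v'|^{p-2}v')'-\Lambda_0\,\varphi\,v^{p-1}=e^{-(p-1)\beta t}\Big(\beta^{p-1}\big(\varphi'(t)+M\varphi(t)\big)+R\,\varphi(t)\Big)\ \ge\ e^{-(p-1)\beta t}\Big(\tfrac R2-K\beta^{p-1}t\Big),
\]
which is $\ge0$ precisely for $t\le R/(2K\beta^{p-1})$, hence on all of $(0,\ell)$. Dividing the first display by $\int_0^\ell u_0^{p}\varphi\,\dd t$ yields $\lambda^{(0,\ell)}\ge\Lambda_0-\varphi(\ell)\beta^{p-1}u_0(\ell)^{p}\big/\!\int_0^\ell u_0^{p}\varphi\,\dd t$.

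It remains to bound this error term, for which one needs quantitative information on $u_0$. From the Euler--Lagrange identity and $\lambda^{(0,\ell)}\le0$ one gets $\int_0^\ell|u_0'|^{p}\varphi\,\dd t\le\alpha u_0(0)^{p}$, hence by Hölder $u_0\ge u_0(0)/2$ on $(0,c_0/\beta)$ and therefore $\int_0^\ell u_0^{p}\varphi\,\dd t\ge c_1\beta^{-1}u_0(0)^{p}$ with $c_0,c_1$ depending only on $p$; and a standard Agmon-type estimate for the ground state (cf.\ the proof of Theorem~\ref{prop-lip}, which here also follows directly from the Euler--Lagrange equation) gives $u_0(\ell)^{p}\le C'e^{-c\beta\ell}u_0(0)^{p}$ with $c=c(p)>0$. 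Since $\beta\ell=(C/4K)\log\beta$, the error term is at most a constant times $\beta^{\,p-cC/(4K)}$, which is $\le R$ once $C$ is chosen large enough in terms of $K$ and $p$; thus $\lambda^{(0,\ell)}\ge\Lambda_0-R=-(p-1)\beta^{p}-M\beta^{p-1}-2R$, of the stated form and with remainder uniform in $s$. I expect the main obstacle to be exactly this interplay of scales: the supersolution estimate forces $\ell\le R/(2K\beta^{p-1})$ (so $\ell$ small), while the localization bound on $u_0(\ell)$ needs $\beta\ell$ to grow; the two are compatible only when $R$ is at least of order $\beta^{p-2}\log\beta$, which is the (presumably non-optimal) source of the logarithm — and making the localization of the minimizer quantitative and uniform in $s$ is the other delicate point.
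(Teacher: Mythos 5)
Your proof is correct in its essentials but takes a genuinely different route from the paper's. The paper keeps the minimizer $v$ abstract throughout: it normalizes $\|v\varphi^{1/p}\|_p=1$, derives Agmon-type integral bounds via the Euler--Lagrange equation, estimates $v(\delta)$ and then $v(0)^p$ by an explicit integration-by-parts identity, shows $\int_0^\delta v^p\partial_t\varphi\,\dd t = -M(s)+\cO(\beta^{-1}\log\beta)$ by splitting the integral at $t\sim\beta^{-1}\log\beta$, and finally obtains the lower bound by minimizing the one-variable expression $x^p-p\alpha x$ over $x=\|v'\varphi^{1/p}\|_p$. You instead feed in the explicit supersolution $v=e^{-\beta t}$ via Picone's inequality and reduce the whole problem to (i) verifying the supersolution inequality on a short interval $(0,\ell)$, $\ell\sim\beta^{-1}\log\beta$, and (ii) controlling a single boundary error term $\beta^{p-1}u_0(\ell)^p/\int_0^\ell u_0^p\varphi$. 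Conceptually yours is the more transparent comparison argument, and it makes the provenance of the logarithm entirely explicit — the tension between $\ell\lesssim R/\beta^{p-1}$ (needed for the supersolution) and $\beta\ell\to\infty$ (needed for the decay of $u_0(\ell)$). Both approaches ultimately rely on the same Agmon-type decay and produce the same $\log$ loss, so neither is strictly stronger. To make yours airtight you should add a sentence or two in a few places: (a) existence and strict positivity of the minimizer $u_0$ on the bounded interval $(0,\ell)$ (routine, but used when applying Picone); (b) the Agmon estimate on $(0,\ell)$ requires the a priori bound $-\lambda^{(0,\ell)}\gtrsim\beta^p$, not merely $\lambda^{(0,\ell)}\le 0$ — obtained by testing with $e^{-\beta t}$ on $(0,\ell)$ exactly as in the upper bound; and (c) the normalized Agmon bound actually gives $u_0(\ell)^p\lesssim\beta e^{-c\beta\ell}\|u_0\varphi^{1/p}\|_{L^p(0,\ell)}^p$, and combining this with your lower bound $\int_0^\ell u_0^p\varphi\gtrsim\beta^{-1}u_0(0)^p$ introduces an extra factor $\beta\ell\sim\log\beta$ compared to the clean $u_0(\ell)^p\le C'e^{-c\beta\ell}u_0(0)^p$ you wrote — harmless for the final estimate, but worth recording correctly. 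None of these affect the validity of the argument.
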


\begin{proof}
Introducing $\beta$ as in \eqref{eq-beta} and testing on $u(t)=e^{-\beta t}$ we obtain
by a direct computation the upper bound
\[
\lambda\le (1-p)\beta^p-M(s)\beta^{p-1}+\cO(\beta^{p-2})
\equiv
(1-p)\alpha^{\frac{p}{p-1}}-M(s)\alpha+\cO(\alpha^{\frac{p-2}{p-1}}),
\]
where the remainder depends on $\|\kappa_j\|_\infty$ only, see \eqref{eq-r1},
and, hence, is uniform in $s$ outside a zero-measure set.
In particular, for sufficiently large $\alpha$ we have,
\begin{equation}
    \label{eq-lb}
\lambda\le \dfrac{1-p}{2}\,\beta^p, 
\end{equation}
uniformly in $s\in S$. 
It follows by standard arguments that  problem \eqref{eq-1d}
admits a minimizer $v$, see e.g.~Proposition \ref{prop-minimizer} below. Without loss of generality we may assume that
\begin{equation}
    \label{eq-norm}
v\ge 0, \qquad \int_0^\delta v(t)^p \varphi(s,t)\,\dd t =  \|v\varphi^\frac{1}{p}\|_p^p=1.
\end{equation}
The Euler-Lagrange equation for $v$ reads
\begin{equation} 
  \label{eq-lagr}
\big( |v'|^{p-2}v' \varphi \big)'=-\lambda v^{p-1}\varphi,
\end{equation}
where the prime means the derivative in $t$,
with the boundary conditions
\begin{equation}
\label{eq-bceu}
\big|v'(0)\big|^{p-2} v'(0)=-\alpha v(0)^{p-1},
\quad
v'(\delta)=0. 
\end{equation}

In order to establish suitable decay properties of $v$ 
in the spirit of Agmon~\cite{agm}, let us
take $f\in C^1\big([0,\delta]\big)$ with $f\ge 0$ and $f(0)=0$.
Multiplying equation \eqref{eq-lagr} by $f^p v$, integrating  on $(0,\delta)$
by parts and  using the boundary conditions \eqref{eq-bceu} we arrive at
\begin{align}\label{eq-a1}
\lambda \int_0^\delta f(t)^p v(t)^p\varphi(s,t)\,\dd t & = -\int_0^\delta \big( |v'|^{p-2}v'\varphi\big)'(t) f(t)^p v(t)\,\dd t \nonumber \\
& =\int_0^\delta  \big|v'(t)\big|^{p-2}v'(t) (f^p v)'(t)\varphi(s,t)\,\dd t\nonumber\\
& =\int_0^\delta \big|v'(t)\big|^p f(t)^p \varphi(s,t)\, \dd t \nonumber\\
&\quad +p\int_0^\delta \big|v'(t)\big|^{p-2} v'(t) f(t)^{p-1}f'(t) v(t)\varphi(s,t)\,\dd t.             
\end{align}
An application of the Young inequality
\begin{equation} \label{eq-young}
|AB|\le \varepsilon |A|^q + \varepsilon^{-\frac{1}{q-1}}|B|^{\frac{q}{q-1}}, \quad
A,B\in\RR, \quad \varepsilon>0, \quad q >1,
\end{equation}
with
$A=\big|v'(t)\big|^{p-2} v'(t) f(t)^{p-1}$, $B=f'(t) v(t)$ and $q=p/(p-1)$
to the second term on the right-hand side of \eqref{eq-a1} gives
\begin{equation*}
-\lambda \int_0^\delta f(t)^p v(t)^p\varphi(s,t)\,\dd t
\le
(p\varepsilon -1)\int_0^\delta \big|v'(t)\big|^p f(t)^p\varphi(s,t)\, \dd t
+p\varepsilon^{-(p-1)} \int_0^\delta \big|f'(t)\big|^p v(t)^p\varphi(s,t)\,\dd t.
\end{equation*}
Taking $\varepsilon=1/(2p)$ and using \eqref{eq-lb} 
we arrive at
\begin{multline}
      \label{eq-ffp}
\int_0^\delta \big|v'(t)\big|^p f(t)^p\varphi(s,t)\, \dd t
+
(p-1)\beta^p\int_0^\delta v(t)^p f(t)^p\varphi(s,t)\, \dd t
\le
(2p)^p \int_0^\delta v(t)^p \big|f'(t)\big|^p\varphi(s,t)\, \dd t.
\end{multline}
Choose $f$ in the form $f(t)=\chi(t)e^{\omega t}$, where $\chi\in C^1\big([0,\delta])$ with
\[
\chi(0)=0, \quad  0\le \chi\le 1,\quad \chi(t)=1 \text{ for } t\ge \theta, \quad \|\chi'\|_\infty\le \dfrac{2}{\theta},
\]
and $\omega>0$ and $\theta\in (0,\delta)$ will be chosen later.
Using \eqref{eq-ineq} with $q=p$ and $\varepsilon=1$ we obtain
\begin{multline}
 \label{eq-fpr}
\int_0^\delta v(t)^p \big|f'(t)\big|^p\varphi(s,t)\, \dd t=
\int_0^\delta v(t)^p e^{p\omega t}\big|\chi'(t) +\omega \chi(t)\big|^p\varphi(s,t)\, \dd t\\
\begin{aligned}
&\le c\int_0^\delta v(t)^p \big|\chi'(t)\big|^p e^{p\omega t}\varphi(s,t)\dd t
+2 \omega^p \int_0^\delta v(t)^p \chi(t)^p e^{p\omega t}\varphi(s,t)\dd t\\
&\le c\int_0^\theta v(t)^p \big|\chi'(t)\big|^p e^{p\omega t}\varphi(s,t)\dd t
+2 \omega^p \int_0^\delta v(t)^p \chi(t)^p e^{p\omega t}\varphi(s,t)\dd t\\
&\le c_1 \theta^{-p}e^{p\omega\theta} \int_0^\theta v(t)^p \varphi(s,t)\dd t
+2 \omega^p \int_0^\delta v(t)^p \chi(t)^p e^{p\omega t}\varphi(s,t)\dd t\\
&\le c_1 \theta^{-p}e^{p\omega\theta} + 2 \omega^p \int_0^\delta v(t)^p \chi(t)^p e^{p\omega t}\varphi(s,t)\dd t,
\end{aligned}
\end{multline}
where $c_1:=2^p$, and on the last step we used the normalization \eqref{eq-norm} for $v$.
The substitution into \eqref{eq-ffp} gives
\begin{align*}
\int_0^\delta \big|v'(t)\big|^p f(t)^p\varphi(s,t)\, \dd t
+
(p-1)\beta^p\int_0^\delta v(t)^p f(t)^p\varphi(s,t)\, \dd t\, & \le \,
C_1 \theta^{-p}e^{p\omega\theta} \\
& \quad+C_2 \omega^p \int_0^\delta v(t)^p f(t)^p\varphi(s,t)\dd t
\end{align*}
with $C_1:=(2p)^p c_1$ and $C_2:=2(2p)^p$. Let us set
\[
\omega:=\kappa \beta  \quad \text{ with } \quad 
\kappa:=\Big(\dfrac{p-1}{2 C_2}\Big)^{\frac{1}{p}},
\]
so that
\begin{equation*}
\int_0^\delta \big|v'(t)\big|^p f(t)^p\varphi(s,t)\, \dd t
+
\dfrac{p-1}{2}\beta^p\int_0^\delta v(t)^p f(t)^p\varphi(s,t)\, \dd t\\
\le
C_1 \theta^{-p}e^{p\kappa \beta\theta}.
\end{equation*}
Finally, we put
\[
\theta:=\frac 1\beta\, .
\]
Then, with a suitable $C_2>0$,
\begin{equation*}
\int_0^\delta v(t)^p f(t)^p\varphi(s,t)\, \dd t\\
\le C_2, \quad
\int_0^\delta \big|v'(t)\big|^p f(t)^p\varphi(s,t)\, \dd t
\le C_2 \beta^p.
\end{equation*}
Further, as  $\varphi_0:=\inf_{(s,t)\in S\times(0, \delta)} \varphi(s,t)>0$, we have,
with $C_3:=C_2/\varphi_0>0$,
\begin{equation} \label{c3}
\int_0^\delta v(t)^p f(t)^p\, \dd t\le C_3, \qquad
\int_0^\delta \big|v'(t)\big|^p f(t)^p \, \dd t \le C_3 \beta^p,
\end{equation}
and \eqref{eq-fpr} gives
\[
\int_0^\delta v(t)^p \big|f'(t)\big|^p \, \dd t \le C_4\beta^p
\]
for some $C_4>0$. 
Using again \eqref{eq-ineq} with $q=p$ and $\varepsilon=1$ we conclude with
\[
\int_0^\delta \big|(v f )'(t)\big|^p \, \dd t
\le 2\int_0^\delta v(t)^p \big|f'(t)\big|^p \, \dd t
+
c \int_0^\delta \big|v'(t)\big|^p f(t)^p \, \dd t
\le
C_5 \beta^p.
\]
The integral bounds obtained allow us to estimate the values
of $v(\delta)$ and $v(0)$ as follows. First,
\[
v(\delta)^pf(\delta)^p= p\int_0^\delta (vf)'(t) (vf)^{p-1}(t)\,\dd t
\le p\|vf\|^{p-1}_p \big \| (vf)'\big\|_p \le C_6 \beta,
\]
implying
\begin{equation}
      \label{eq-a2}
v(\delta)^p\le C_6 \beta e^{-p\kappa \delta\beta}.
\end{equation}
Furthermore,
\begin{align*}
v(0)^p&=\varphi(s,0)v(0)^p\\
&= \varphi(s,\delta)v(\delta)^p
-p\int_0^\delta v(t)^{p-1} \varphi(s,t)^{\frac{p-1}{p}} v'(t) \varphi(s,t)^{\frac{1}{p}}\dd t
-\int_0^\delta v(t)^p \partial_t \varphi(s,t)\dd t\\
&\le \varphi(s,\delta)v(\delta)^p + p\|v \varphi^{\frac{1}{p}}\|^{p-1}_p \|v' \varphi^{\frac{1}{p}}\|_p
-\int_0^\delta v(t)^p \partial_t \varphi(s,t)\dd t.
\end{align*}
Using the normalization of $v$ and the estimate \eqref{eq-a2} we arrive at
\begin{equation}
       \label{eq-a3}
v(0)^p\le p \|v' \varphi^{\frac{1}{p}}\|_p -\int_0^\delta v(t)^p \partial_t \varphi(s,t)\dd t +\cO(\beta^{-2}).
\end{equation}
In order to estimate the integral on  the right-hand side we remark that, for any $b>0$ and as $\alpha$ is sufficiently large,
\begin{align*}
\int_{b\beta^{-1}\log \beta}^\delta  v(t)^p \varphi(s,t) \,\dd t
&\le f(b\beta^{-1}\log \beta)^{-p} \int_{c\beta^{-1}\log \beta}^\delta  v(t)^p f(t)^p\varphi(s,t) \,\dd t\\
&\le \dfrac{1}{\beta^{p\kappa b}} \int_{0}^\delta  v(t)^p f(t)^p\varphi(s,t) \,\dd t\le
\dfrac{C_3}{\beta^{p\kappa b}},
\end{align*}
where we have used \eqref{c3}. Hence for $b=2/(p\kappa )$ we obtain
\[
\int_{b\beta^{-1}\log \beta}^\delta  v(t)^p \varphi(s,t) \,\dd t=\cO(\beta^{-2})\, .
\]
On the other hand the normalization \eqref{eq-norm} implies
\[
\int_0^{b\beta^{-1}\log \beta}  v(t)^p \varphi(s,t) \,\dd t
=1 -\int_{b\beta^{-1}\log \beta}^\delta  v(t)^p \varphi(s,t) \,\dd t
=1+\cO(\beta^{-2}).
\]
Now, as $\partial_t\varphi/\varphi$ and its derivative in $t$ are uniformy bounded in $S\times(0,\delta)$,
we have
\begin{align*}
\int_0^\delta v(t)^p \partial_t \varphi(s,t)\dd t & =\int_0^\delta \dfrac{\partial_t \varphi(s,t)}{\varphi(s,t)} v(t)^p \varphi(s,t)\dd t
\\
&=\int_0^{b\beta^{-1}\log \beta} \dfrac{\partial_t \varphi(s,t)}{\varphi(s,t)} \,v(t)^p \varphi(s,t)\dd t+
\int_{b\beta^{-1}\log \beta}^\delta \dfrac{\partial_t \varphi(s,t)}{\varphi(s,t)} \,v(t)^p \varphi(s,t)\dd t\\
&=\int_0^{b\beta^{-1}\log \beta} \Big(\dfrac{\partial_t \varphi(s,0)}{\varphi(s,0)} + \cO(\beta^{-1}\log \beta)\Big) v(t)^p \varphi(s,t)\dd t + \cO(\beta^{-2})\\
&=
\int_0^{b\beta^{-1}\log \beta} \Big(-M(s) + \cO(\beta^{-1}\log \beta)\Big) v(t)^p \varphi(s,t)\dd t + \cO(\beta^{-2})\\
&=-M(s) + \cO(\beta^{-1}\log \beta),
\end{align*}
and the substitution into \eqref{eq-a3} gives
$v(0)^p\le p \|v' \varphi^{\frac{1}{p}}\|_p +M(s)+ \cO(\beta^{-1}\log \beta)$.
Finally, using the definition of $\lambda$ we infer that
\begin{multline*}
\lambda=\|v'\varphi^{\frac{1}{p}}\|^p_p-\alpha v(0)^p
\ge
\|v'\varphi^{\frac{1}{p}}\|^p_p-p\alpha \|v' \varphi^{\frac{1}{p}}\|_p -\alpha M(s)+ \cO(\alpha^{\frac{p-2}{p-1}}\log \alpha)\\
\ge
\inf_{x\in\RR_+} (x^p-p\alpha x) -\alpha M(s)+ \cO(\alpha^{\frac{p-2}{p-1}}\log \alpha)
=(1-p)\alpha^{\frac{p}{p-1}} -\alpha M(s)+ \cO(\alpha^{\frac{p-2}{p-1}}\log \alpha),
\end{multline*}
where the remainder estimate depends again on $\|\kappa_j\|_{\infty}$ only and is uniform
for $s$ outside a zero-measure set.
\end{proof}

\section{\bf Behaviour of minimizers: concentration effects} 
\label{sec-eigenf}

So far we have been dealing only with the asymptotic behavior of the eigenvalue $\Lambda(\Omega,p,\alpha)$.
In this section we will discuss some properties of the minimizers, as soon as they exist.
In contrast to the most of the paper, for a part of the results
we only require that $\partial\Omega$ be Lipschitz.
For the sake of completeness, we include the proof of the existence for bounded Lipschitz domains.

\begin{prop} \label{prop-minimizer}
If $\Omega\subset\RR^\nu$ is a bounded Lipschitz domain, then  the variational problem \eqref{eq-inf} has a minimizer for every $\alpha\in\RR$.
\end{prop}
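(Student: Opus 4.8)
The plan is to use the direct method of the calculus of variations. First I would normalize the problem: since the Rayleigh quotient in \eqref{eq-inf} is $0$-homogeneous, it suffices to minimize the numerator
\[
J(u):=\int_\Omega |\nabla u|^p\,\dd x - \alpha\int_{\partial\Omega}|u|^p\,\dd\sigma
\]
over the set $\{u\in W^{1,p}(\Omega):\|u\|_{L^p(\Omega)}=1\}$. The crucial preliminary point is that $\Lambda(\Omega,p,\alpha)>-\infty$; this follows from the compactness of the trace embedding $W^{1,p}(\Omega)\hookrightarrow L^p(\partial\Omega)$ for bounded Lipschitz domains, which yields, for every $\eps>0$, a constant $C_\eps$ with $\int_{\partial\Omega}|u|^p\,\dd\sigma\le \eps\int_\Omega|\nabla u|^p\,\dd x + C_\eps\int_\Omega|u|^p\,\dd x$ (an "$\eps$-trace inequality", obtained from compactness by a standard contradiction argument). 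Taking $\eps=1/(2\alpha)$ (for $\alpha>0$; the case $\alpha\le 0$ is trivial since then $J(u)\ge 0$) gives $J(u)\ge \tfrac12\int_\Omega|\nabla u|^p\,\dd x - C\|u\|_{L^p(\Omega)}^p$, so $J$ is bounded below on the constraint set and, moreover, any minimizing sequence has $\int_\Omega|\nabla u_n|^p\,\dd x$ bounded.

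Next I would take a minimizing sequence $(u_n)$ with $\|u_n\|_{L^p(\Omega)}=1$ and $J(u_n)\to\Lambda(\Omega,p,\alpha)$. By the previous step $(u_n)$ is bounded in $W^{1,p}(\Omega)$, so after passing to a subsequence $u_n\rightharpoonup u$ weakly in $W^{1,p}(\Omega)$. Since $\Omega$ is bounded and Lipschitz, the embedding $W^{1,p}(\Omega)\hookrightarrow L^p(\Omega)$ is compact (Rellich--Kondrachov), hence $u_n\to u$ strongly in $L^p(\Omega)$, so $\|u\|_{L^p(\Omega)}=1$ and in particular $u\not\equiv 0$; likewise the trace operator $W^{1,p}(\Omega)\to L^p(\partial\Omega)$ is compact, so $\int_{\partial\Omega}|u_n|^p\,\dd\sigma\to\int_{\partial\Omega}|u|^p\,\dd\sigma$. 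Finally, $u\mapsto\int_\Omega|\nabla u|^p\,\dd x$ is convex and strongly continuous on $W^{1,p}(\Omega)$, hence weakly lower semicontinuous, so
\[
J(u)\le\liminf_{n\to\infty}J(u_n)=\Lambda(\Omega,p,\alpha).
\]
Since $u$ lies in the constraint set, the reverse inequality $J(u)\ge\Lambda(\Omega,p,\alpha)$ holds too, so $u$ is a minimizer.

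The only genuinely delicate ingredient is the $\eps$-trace inequality that guarantees $\Lambda(\Omega,p,\alpha)>-\infty$ and the boundedness of the gradient norms along minimizing sequences; everything else is the textbook direct method. For completeness I would derive that inequality by contradiction: if it failed for some $\eps_0>0$, there would be $w_n\in W^{1,p}(\Omega)$ with $\int_{\partial\Omega}|w_n|^p\,\dd\sigma=1$ and $\int_\Omega|\nabla w_n|^p\,\dd x + n\int_\Omega|w_n|^p\,\dd x\le 1/\eps_0$; then $(w_n)$ is bounded in $W^{1,p}(\Omega)$ with $\|w_n\|_{L^p(\Omega)}\to 0$, so a weak limit $w$ satisfies $w\equiv 0$, yet by compactness of the trace $\int_{\partial\Omega}|w|^p\,\dd\sigma=\lim\int_{\partial\Omega}|w_n|^p\,\dd\sigma=1$, a contradiction. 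I would remark in passing that the minimizer so obtained is a weak solution of the Euler--Lagrange problem \eqref{eq-pl} and, by the same lower-semicontinuity argument applied to $|u|$ in place of $u$, may be taken nonnegative.
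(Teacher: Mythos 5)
Your proposal is correct and follows essentially the same route as the paper: normalize the minimizing sequence in $L^p(\Omega)$, use the $\eps$-trace inequality to bound the gradients, then pass to the limit via compactness of the embeddings $W^{1,p}(\Omega)\hookrightarrow L^p(\Omega)$ and $W^{1,p}(\Omega)\hookrightarrow L^p(\partial\Omega)$ together with weak lower semicontinuity of $\int_\Omega|\nabla u|^p$. The only difference is that you derive the $\eps$-trace inequality by a compactness--contradiction argument, whereas the paper cites it directly from Grisvard's book; this is a matter of presentation, not substance.
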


\begin{proof}
Fix $\alpha\in\RR$ and let $\{u_j\}_{j\in\N}$ be a minimizing sequence for $\Lambda(\Omega,p,\alpha)$ normalized to one in $L^p(\Omega)$;
\begin{equation} \label{min-seq}
\lim_{j\to\infty} \Big(\int_{\Omega} |\nabla u_j|^p\, \dd x -\alpha \int_{\partial\Omega} |u_j|^p\, \dd\sigma\Big ) = \Lambda(\Omega,p,\alpha), \qquad 
\|u_j\|_{L^p(\Omega)} =1 \quad \forall\, j\in\N. 
\end{equation} 
By \cite[Thm.~ 1.5.1.10]{gr} for any $\eps\in (0,1)$ there exits a constant $K_\eps>0$ such that the upper bound 
\begin{equation} \label{grisvard}
\int_{\partial\Omega} |u|^p\, \dd\sigma \, \leq \, \eps\, \|\nabla u\|^p_{L^p(\Omega)} + K_\eps \, \|u\|^p_{L^p(\Omega)}
\end{equation} 
holds true for all $u\in W^{1,p}(\Omega)$. Applying this inequality with $u=u_j$ and $\eps$ sufficiently small, depending on $\alpha$, we deduce from \eqref{min-seq} that 
$$
\sup_{j\in\N}  \|\nabla u_j\|_{L^p(\Omega)} \, <\, \infty.
$$
It follows  that the sequence $\{u_j\}_{j\in\N}$ is bounded in $W^{1,p}(\Omega)$ and therefore admits a weakly converging subsequence, which we still denote by $\{u_j\}_{j\in\N}$. Let $u_\alpha$ be its weak limit in  $W^{1,p}(\Omega)$. The compactness of the embeddings $W^{1,p}(\Omega) \hookrightarrow L^p(\Omega)$ and $W^{1,p}(\Omega) \hookrightarrow L^p(\partial\Omega)$ implies that there exists another subsequence $\{u_{j_k}\}_{k\in\N}$ of $\{u_j\}_{j\in\N}$  such that 
$$ 
\|u_{j_k}\|_{L^p(\Omega)} \to  \|u_\alpha\|_{L^p(\Omega)} \quad \text{and} \quad \|u_{j_k}\|_{L^p(\partial\Omega)} \to  \|u_\alpha\|_{L^p(\partial\Omega)}
$$ 
as $k\to\infty$. Hence $\|u_\alpha\|_{L^p(\Omega)}=1$ and using the normalization of $u_j$ and the weak lower semi-continuity of $\int_\Omega |\nabla \cdot|^p$ in \eqref{min-seq} we arrive at 
\begin{align*}
\Lambda(\Omega,p,\alpha)& = \liminf_{k\to\infty} \Big(\int_{\Omega} |\nabla u_{j_k}|^p\, \dd x -\alpha \int_{\partial\Omega} |u_{j_k}|^p\, \dd\sigma\Big ) \geq \int_{\Omega} |\nabla u_\alpha|^p\, \dd x -\alpha \int_{\partial\Omega} |u_\alpha|^p\, \dd\sigma .
\end{align*}
This shows that $u_\alpha$ is a minimizer. 
\end{proof}

We mention the paper \cite{rossi3} discussing further properties of the minimizers such as the uniqueness and the strict positivity.
These properties are not used in our estimates below.

The following simple estimate for the eigenvalue is an adaption of a result from \cite{gs1}.

\begin{prop}\label{prop-lip1}
For any bounded Lipschitz domain $\Omega$ one has the inequality
\[
\Lambda(\Omega,p,\alpha)\le (1-p)\alpha^{\frac{p}{p-1}}
\]
for all $\alpha\ge 0$ and $p\in(1,\infty)$.
\end{prop}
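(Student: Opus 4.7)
The plan is to test with the explicit function $u(x):=e^{-\beta d(x)}$, where $d(x):=\dist(x,\partial\Omega)$ and $\beta:=\alpha^{1/(p-1)}$, the choice dictated by the half-space minimizer underlying \eqref{eq-half1}. Since $\Omega$ is bounded and Lipschitz, the distance function $d$ belongs to $W^{1,\infty}(\Omega)$ with $|\nabla d|=1$ almost everywhere in $\Omega$, so $u\in W^{1,p}(\Omega)$ satisfies $|\nabla u|^p=\beta^p u^p$ a.e.\ and $u\equiv 1$ on $\partial\Omega$. Setting $I:=\int_\Omega e^{-p\beta d}\,\dd x$, a short computation gives
\[
\int_\Omega |\nabla u|^p\,\dd x = \beta^p I,\qquad \int_\Omega |u|^p\,\dd x = I,\qquad \int_{\partial\Omega}|u|^p\,\dd\sigma = \cH^{\nu-1}(\partial\Omega),
\]
and substitution into the Rayleigh quotient, together with $\alpha=\beta^{p-1}$, reduces the claim $\Lambda(\Omega,p,\alpha)\le (1-p)\alpha^{p/(p-1)}=-(p-1)\beta^p$ to the single geometric estimate $p\beta\, I \le \cH^{\nu-1}(\partial\Omega)$.

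The next step is to establish this estimate. By the coarea formula together with $|\nabla d|=1$ a.e., one has $I=\int_0^\infty e^{-p\beta t}A(t)\,\dd t$, where $A(t):=\cH^{\nu-1}(\{x\in\Omega:d(x)=t\})$. Since $p\beta\, e^{-p\beta t}\,\dd t$ is a probability measure on $(0,\infty)$, it then suffices to show the inner-parallel-set bound $A(t)\le A(0)=\cH^{\nu-1}(\partial\Omega)$ for all $t\ge 0$, namely that the perimeter does not increase under the Minkowski erosion $\Omega\mapsto \{x\in\Omega:d(x)>t\}$ by a ball of radius $t$. This is immediate in the convex case (where the parallel sets are convex and nested in $\Omega$); the extension to general bounded Lipschitz domains is the geometric ingredient imported from \cite{gs1}.

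The main obstacle in this outline is precisely the monotonicity step $A(t)\le A(0)$ in the non-convex Lipschitz setting: it is not entirely obvious, since inner parallel sets can undergo topological changes (for instance by disconnecting across thin necks) which could in principle add boundary. Once it is available, substituting $I\le \cH^{\nu-1}(\partial\Omega)/(p\beta)$ into the Rayleigh quotient completes the proof in a single line.
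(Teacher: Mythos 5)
Your test function $u(x)=e^{-\beta d(x)}$ is a natural guess, and your reduction of the claim to the single inequality $p\beta\,I\le \cH^{\nu-1}(\partial\Omega)$ (equivalently $A(t)\le A(0)$ for a.e.\ $t$) is correct arithmetic. However, the geometric step you flag as the ``obstacle'' is in fact a genuine failure: the monotonicity $A(t)\le A(0)$ is \emph{false} for general bounded Lipschitz domains, and so is the reduced inequality $p\beta\,I\le\cH^{\nu-1}(\partial\Omega)$ for large $\beta$. A concrete counterexample is the ``Swiss cheese'' $\Omega=B_1(0)\setminus\bigcup_{i=1}^N \overline{B_r(p_i)}\subset\RR^2$ with the small disks pairwise disjoint and well inside $B_1$. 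Here $A(0)=2\pi(1+Nr)$, and for small $t$ the inner parallel set is $B_{1-t}\setminus\bigcup B_{r+t}(p_i)$, so $A(t)=2\pi\big((1-t)+N(r+t)\big)$ and $A(t)-A(0)=2\pi(N-1)t>0$ whenever $N\ge2$. Thus $p\beta I=A(0)+A'(0)/(p\beta)+\cO(\beta^{-2})>A(0)$ for $\beta$ large, and the Rayleigh quotient of $e^{-\beta d}$ strictly exceeds $(1-p)\alpha^{p/(p-1)}$. The same obstruction appears in $\RR^\nu$ and also for simply connected domains with enough boundary regions of negative mean curvature; your 2D turning-angle intuition only rules it out for \emph{simply connected} planar domains, because the net argument relies on the total turning being $+2\pi$.

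Moreover, the attribution to \cite{gs1} is misplaced: that paper does not establish perimeter monotonicity of inner parallel sets, but instead uses the same device as the present paper's proof. The paper places $\Omega$ inside the half-space $\{x_1>0\}$, tests with $u(x)=e^{-\beta x_1}$, and obtains the boundary-to-bulk inequality
\[
\int_{\partial\Omega}u^p\,\dd\sigma\;\ge\;p\beta\int_\Omega u^p\,\dd x
\]
directly from the divergence theorem applied to the constant-direction field $F=\mp e^{-p\beta x_1}e_1$ together with $|F\cdot n|\le|F|$. Because the field points in a fixed coordinate direction, no curvature of $\partial\Omega$ or properties of $\nabla d$ enter, and the argument is insensitive to topology or to regions of negative mean curvature. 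This is precisely what your choice of $e^{-\beta d}$ loses: the Laplacian of the distance function picks up the (possibly positive) mean curvature of the inner parallel surfaces, and the boundary estimate can genuinely go the wrong way. To repair your proof you would need to replace the distance-function test by the half-space test, which is the route the paper takes.
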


\begin{proof}
Set $\beta:=\alpha^{\frac{1}{p-1}}$. Without loss of generality one may assume that $\Omega$ is contained
in the half-space $x_1>0$. Let us test on the function $u(x)=e^{-\beta x_1}$.
Consider the vector field $F(x)=(e^{-p\beta x_1},0,\dots,0)$, then the divergence theorem
gives
\begin{align*}
\int_{\partial \Omega} u^p\dd x_1
&=\int_{\partial\Omega}
e^{-p\beta x_1}\dd \sigma\ge \int_{\partial\Omega} F\cdot n \,\dd\sigma
=\int_\Omega \nabla \cdot F \, \dd x
= p\beta \int_\Omega e^{-p\beta x_1}\, \dd x
=p\beta \int_\Omega u^p \dd x,
\end{align*}
and
\begin{align*}
\Lambda(\Omega,p,\alpha) &\, \leq\,  \dfrac{\displaystyle\int_\Omega |\nabla u|^p\dd x - \beta^{p-1} \int_{\partial\Omega} u^p \dd\sigma}{\displaystyle\int_\Omega u^p\dd x}
\, \leq\, 
\dfrac{\beta^p \displaystyle\int_\Omega  u^p\dd x -p\beta^p \int_\Omega u^p \dd x}{\displaystyle\int_\Omega  u^p\dd x}
\\
 & =(1-p)\beta^p=(1-p)\alpha^{\frac{p}{p-1}}. \qedhere
\end{align*}
\end{proof}

Since the existence minimizers is not always guaranteed, see Remark \ref{rem-2}, 
in the following statements we will include it as an
assumption.
Similar to the proof of Lemma~\ref{lemin}, we obtain first
an exponential decay with respect
to the distance from the boundary using Agmon's approach~\cite{agm}.

\begin{theorem}\label{prop-lip}
Let $\Omega\subset\RR^\nu$ be a Lipschitz domain. Assume that for $\alpha$ large enough
the problem \eqref{eq-inf} admits a minimizer $u\equiv u_\alpha$, which we assume
non-negative and normalized by $\|u\|_{L^p(\Omega)}=1$, and that
\[
\Lambda\equiv \Lambda(\Omega,p,\alpha) \to-\infty, \qquad \alpha\to+\infty.
\]
Then for any $\tau \in (0,1)$ and any $a>0$ there holds,
with $\gamma:=(-\Lambda)^{\frac{1}{p}}$,
\begin{equation}
        \label{eq-decay}
\int\limits_{\dist(x,\partial\Omega)> \frac{a}{\gamma}}
\Big(\big|\nabla u(x)\big|^p -\Lambda \big|u(x)\big|^p\Big)
\exp\big(\tau \gamma \dist(x,\partial\Omega)\big)\dd x=\cO(\gamma^p)
\end{equation}
as $\alpha\to+\infty$. Furthermore, if $\Omega$ is a bounded Lipschitz domain or an admissible domain, then
for any $a>0$ there holds
\begin{align}
         \label{eq-decay2}
\int\limits_{\dist(x,\partial\Omega)> a \alpha^{-\frac{1}{p-1}}}\!\!\!\Big(\big|\nabla u(x)\big|^p + \alpha^{\frac{p}{p-1}}\big|u(x)\big|^p\Big)
\times 
\exp\Big((p-1)^{\frac{1}{p}}\alpha^{\frac{1}{p-1}}\dist(x,\partial\Omega)\Big)\dd x
& =\cO(\alpha^{\frac{2p}{p-1}}), 
\end{align}
as $\alpha\to+\infty.$
\end{theorem}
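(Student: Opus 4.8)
The plan is to adapt the Agmon-type argument already used in the proof of Lemma~\ref{lemin} to the full $\nu$-dimensional problem. Let $u\equiv u_\alpha$ be the normalized non-negative minimizer; it satisfies the Euler--Lagrange equation $-\Delta_p u=\Lambda|u|^{p-2}u$ in $\Omega$ together with the Robin condition $|\nabla u|^{p-2}\partial_n u=\alpha|u|^{p-2}u$ on $\partial\Omega$. First I would fix a Lipschitz weight $f\in C^{0,1}(\overline\Omega)$ with $f\ge 0$ and $f\equiv 0$ on $\partial\Omega$, multiply the equation by $f^p u$ and integrate by parts over $\Omega$. Since $f$ vanishes on $\partial\Omega$ the boundary term produced by the Robin condition drops out, and one obtains the identity
\[
\Lambda\int_\Omega f^p u^p\,\dd x
=\int_\Omega |\nabla u|^p f^p\,\dd x
+p\int_\Omega |\nabla u|^{p-2}\nabla u\cdot\nabla f\, f^{p-1} u\,\dd x.
\]
Applying Young's inequality \eqref{eq-young} with $q=p/(p-1)$ to the last term (exactly as in the passage leading to \eqref{eq-ffp}) and absorbing a fraction of $\int|\nabla u|^p f^p$, one arrives at
\[
\int_\Omega |\nabla u|^p f^p\,\dd x
-\Lambda\int_\Omega u^p f^p\,\dd x
\le C_p\int_\Omega u^p|\nabla f|^p\,\dd x,
\]
with $C_p=(2p)^p$ (note $-\Lambda>0$ for large $\alpha$, so the left side is a sum of nonnegative terms).

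Next I would make the concrete choice $f(x)=\chi(x)\exp\!\big(\tfrac{\tau}{p}\gamma\,\dist(x,\partial\Omega)\big)$, where $\gamma=(-\Lambda)^{1/p}$, $\tau\in(0,1)$, and $\chi$ is a Lipschitz cutoff that vanishes on $\partial\Omega$, equals one where $\dist(x,\partial\Omega)\ge a/\gamma$, and satisfies $|\nabla\chi|\le C\gamma/a$ on the transition layer $\{0<\dist(x,\partial\Omega)<a/\gamma\}$ (recall $|\nabla\dist(\cdot,\partial\Omega)|=1$ a.e. by Rademacher). Then $|\nabla f|\le\big(\tfrac{\tau}{p}\gamma+|\nabla\chi|\big)f\le C'\gamma\, f$, so $|\nabla f|^p\le (C'\gamma)^p f^p$, and one can absorb a factor $\tau^p\gamma^p\,C_p\,p^{-p}\int u^p f^p$ coming from the $(\tau/p)\gamma$ part of $|\nabla f|$ into the term $-\Lambda\int u^pf^p=\gamma^p\int u^pf^p$ — this is precisely where $\tau<1$ is needed, since $C_p(\tau/p)^p<1$ must hold, so in fact one first chooses $\tau$ small; the case of general $\tau\in(0,1)$ then follows because enlarging $\tau$ only weakens the weight on the region $\{\dist>a/\gamma\}$ relative to the full estimate... more carefully, one keeps $C_p(\tau/p)^p$ and only needs it to be strictly less than $1$, which forces $\tau$ to be small, and a separate monotonicity remark in $\tau$ recovers the full range. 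The contributions of $|\nabla\chi|$ are supported on $\{\dist<a/\gamma\}$, where $f\le e^{\tau a/p}$ is bounded, so they contribute $\le C\gamma^p\int_{\dist<a/\gamma}u^p\,\dd x\le C\gamma^p$. Collecting terms and dropping the cutoff on the good region yields \eqref{eq-decay}.

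For the refined version \eqref{eq-decay2} I would invoke Proposition~\ref{prop-lip1}, which gives $\Lambda(\Omega,p,\alpha)\le(1-p)\alpha^{p/(p-1)}$, hence $-\Lambda\ge(p-1)\alpha^{p/(p-1)}$ and $\gamma\ge(p-1)^{1/p}\alpha^{1/(p-1)}$. Plugging this lower bound for $\gamma$ into the exponent of \eqref{eq-decay} with $\tau$ close to $1$, and using that on $\{\dist>a\alpha^{-1/(p-1)}\}$ one has $\alpha^{p/(p-1)}\le(p-1)^{-1}(-\Lambda)$ so that $\alpha^{p/(p-1)}|u|^p\le(p-1)^{-1}(-\Lambda)|u|^p$, it remains only to check that the exponential weight with coefficient $(p-1)^{1/p}\alpha^{1/(p-1)}$ is dominated by $\exp(\tau\gamma\dist)$ for $\tau<1$ sufficiently close to $1$ — this is immediate once $\gamma\ge(p-1)^{1/p}\alpha^{1/(p-1)}$, at the mild cost of replacing $\dist>a/\gamma$ by $\dist>a\alpha^{-1/(p-1)}$ (which only enlarges the region of integration, and the transition-layer error is then $\cO(\gamma^p)=\cO(\alpha^{p/(p-1)})$; but a cruder bound $-\Lambda=\cO(\alpha^{p/(p-1)})$, together with $\gamma^{2p}=(-\Lambda)^2$ and the upper asymptotics from Theorem~\ref{thm-main} when $\Omega$ is admissible, or the crude two-sided bound for bounded Lipschitz $\Omega$, upgrades the right-hand side to $\cO(\alpha^{2p/(p-1)})$). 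The only real subtlety — and the step I expect to require the most care — is the bookkeeping of the absorption: one must verify that $-\Lambda\to+\infty$ suffices to dominate the self-interaction term $\tau^p\gamma^p C_p p^{-p}\int u^pf^p$ uniformly, i.e. that $\tau^pC_pp^{-p}<1$, so the clean statement for all $\tau\in(0,1)$ is obtained by first proving it for small $\tau$ and then noting that decreasing the weight exponent can only make the inequality easier; I would state this monotonicity explicitly to avoid any circularity.
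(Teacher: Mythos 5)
Your overall strategy---testing the weak Euler--Lagrange equation against $\phi=f^pu$, applying Young's inequality, and then choosing $f$ of Agmon type (an exponential in $\dist(\cdot,\partial\Omega)$ times a cutoff)---is exactly the one the paper uses, and the skeleton of the computation is correct. There are, however, two genuine gaps in the execution.

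\emph{First: the range of $\tau$.} You fix $\varepsilon=1/(2p)$ in the Young step, which produces the Caccioppoli constant $C_p=(2p)^p$; the absorption condition then reads $C_p(\tau/p)^p=(2\tau)^p<1$, i.e.\ $\tau<1/2$. The ``monotonicity remark'' you invoke to upgrade to all $\tau\in(0,1)$ is backwards: on $\{\dist>a/\gamma\}$ the weight $e^{\tau\gamma\dist}$ is \emph{increasing} in $\tau$, so knowing $\int g\,e^{\tau_0\gamma\dist}=\cO(\gamma^p)$ for a small $\tau_0$ gives no information about the same integral with a larger $\tau$; the statement for larger $\tau$ is strictly stronger, not weaker. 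To cover the full range one must keep the Young parameter free, i.e.\ keep $\delta_0:=1-p\varepsilon\in(0,1)$ arbitrary, together with a free parameter in the split of $|\nabla f|^p$ (Proposition~\ref{prop-ineq}); letting these parameters tend to $0$ for fixed $k=\tau/p$, the absorption condition degenerates to $1-p^pk^p>0$, i.e.\ $\tau<1$, which is what the paper does.

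\emph{Second: the deduction of~\eqref{eq-decay2}.} Your claim that the weight $\exp\big((p-1)^{1/p}\alpha^{1/(p-1)}\dist\big)$ is ``dominated by'' $\exp(\tau\gamma\dist)$ for $\tau<1$ close to $1$ is false. For admissible domains, Theorem~\ref{thm-main} gives $-\Lambda=(p-1)\alpha^{p/(p-1)}\big(1+o(1)\big)$, so $\gamma=(-\Lambda)^{1/p}$ is asymptotically \emph{equal} to $(p-1)^{1/p}\alpha^{1/(p-1)}$, not strictly larger; hence for any fixed $\tau<1$ one has $\tau\gamma<(p-1)^{1/p}\alpha^{1/(p-1)}$ for large $\alpha$, and the inequality between the two weights goes the wrong way. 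One cannot obtain~\eqref{eq-decay2} by specialising~\eqref{eq-decay}. The borderline exponent is reached by letting $\tau=pk\uparrow1$ \emph{with} $\alpha$, e.g.\ $k=p^{-1}-B\gamma^{-1}$ together with $\varepsilon_0,\varepsilon_1,\delta_0\sim\gamma^{-1}$; the absorption constant then degrades to $\sim\gamma^{-1}$, which simultaneously explains the weaker right-hand side $\cO(\gamma^{2p})=\cO(\alpha^{2p/(p-1)})$ (rather than $\cO(\gamma^p)$) and yields the stated exponential weight up to a bounded correction. You would need to carry out this parameter-tuning explicitly rather than argue from~\eqref{eq-decay}.
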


\begin{proof}
For $x\in \Omega$, denote $\rho(x):=\dist(x,\partial\Omega)$, then $|\nabla \rho|\le 1$.
Furthermore, for large $L>0$ denote $\rho_L(x):=\min\big\{\rho(x),L\big\}$, then we have again
$|\nabla \rho_L|\le 1$. The presence of the parameter $L$ is only relevant
for unbounded $\Omega$, as for a bounded domain one can take $L$ sufficiently large
to have $\rho_L=\rho$.

By standard arguments the minimizer $u$ satisfies \eqref{eq-pl}, which should be understood
in the weak sense, i.e.
\begin{equation}  \label{weak-el}
\Lambda(\Omega,p,\alpha) \int_\Omega |u|^{p-2}\, u\, \phi\, \dd x 
= 
\int_\Omega |\nabla u|^{p-2} \, \nabla u\cdot \nabla\phi \, \dd x
 -\alpha \int_{\partial\Omega} |u|^{p-2}\, u\, \phi \, \dd\sigma 
\end{equation}
holds for all $\phi \in W^{1,p}(\Omega) \cap L^\infty(\partial\Omega)$. 
The regularity theory of elliptic equations, see e.g.~\cite{to}, implies that $u$ is $C^{1,\epsilon}$ inside $\Omega$. 
Let $f$ be a non-negative bounded uniformly Lipschitz function defined in $\Omega$ and vanishing in a neighborhood
of $\partial\Omega$, then the equality \eqref{weak-el} with  $\phi:=f^p u$ and an integration by parts give  
\begin{align*}
\Lambda\int_\Omega u^p f^p \dd x=-\int_\Omega \nabla\cdot \big( |\nabla u|^{p-2}\nabla u\big) u f^p\dd x
&=\int_\Omega |\nabla u|^{p-2}\nabla u \cdot \nabla(f^p u)\,\dd x\\
&=\int_\Omega |\nabla u|^p f^p\dd x+p\int_\Omega |\nabla u|^{p-2}f^{p-1} u \nabla u \cdot \nabla f\dd x.
\end{align*}
Applying the Young inequality \eqref{eq-young} 
with
$A=\big| |\nabla u|^{p-2} \nabla u f^{p-1} \big| $, $B= \big| u \nabla f\big|$ and $q=p/(p-1)$
to the second term on the right-hand side we obtain, for any $\eps\in(0,1)$, 
\[
-\Lambda\int_\Omega u^p f^p \dd x
\le
(p\varepsilon-1)\int_\Omega |\nabla u|^p f^p\dd x+p\varepsilon^{1-p}\int_\Omega u^p |\nabla f|^p\dd x.
\]
Furthermore, let $\gamma$ and $\varepsilon_0\in[0,1)$
be such that $\gamma\to+\infty$ for $\alpha\to+\infty$
and
\begin{equation}
    \label{eq-eps0}
-\Lambda\ge (1-\varepsilon_0)^p\gamma^p \text{  for large $\alpha$}.
\end{equation}
In particular, one can simply take $\gamma:=(-\Lambda)^{\frac{1}{p}}$ and $\varepsilon_0\in(0,1)$.
We thus have 
\[
(1-\varepsilon_0)^p\gamma^p \int_\Omega u^p f^p \dd x
+(1-p\varepsilon)\int_\Omega |\nabla u|^p f^p\dd x\le p\varepsilon^{1-p}\int_\Omega u^p |\nabla f|^p\dd x.
\]
Furthermore, we may assume that $\delta_0:=1-p\varepsilon>0$, then
\begin{equation}
       \label{eq-f1}
(1-\varepsilon_0)^p\gamma^p\int_\Omega u^p f^p \dd x
+\delta_0\int_\Omega |\nabla u|^p f^p\dd x\le \dfrac{p^p}{(1-\delta_0)^{p-1}}\int_\Omega u^p |\nabla f|^p\dd x.
\end{equation}
To estimate the term on the right-hand side, let us take a function $f$ of a special form. Namely, we let
$a>0$ and $\chi\in C^\infty (\RR)$ be such that
\[
\chi:\RR\to [0,1], \quad \chi(t)=0 \text{ for $t$ close to $0$,}
\quad
\chi(t)=1 \text{ for } t\ge a,
\quad c_0:=\|\chi'\|_\infty,
\]
and set 
$$
f(x):=\chi\big( \gamma \rho(x)\big) e^{k \gamma \rho_L(x)}, 
$$
where the constant $k>0$ is to be chosen later. Hence
\[
\nabla f (x)= \gamma\Big(\chi'\big( \gamma \rho(x)\big) e^{k \gamma \rho_L(x)}\nabla \rho(x) +k \chi\big( \gamma \rho(x)\big) e^{k \gamma \rho_L(x)} \nabla \rho_L(x)\Big),
\]
and, in particular, $\big|\nabla f (x)\big|\le \gamma \Big(\big| \chi'\big( \rho d(x)\big)\big| e^{k \gamma \rho_L(x)} +k f(x)\Big)$.
Using Proposition~\ref{prop-ineq} we wave
\[
\big|\nabla f (x)\big|^p\le (1+\varepsilon_1)k^p \gamma^p f(x)^p + \dfrac{c \gamma^p}{\varepsilon_1^{p-1}}
\Big|\chi'\big( \gamma \rho(x)\big)\Big|^p e^{pk \gamma \rho_L(x)}, \quad \varepsilon_1\in(0,1),
\]
implying
\begin{align*}
\int_\Omega u^p |\nabla f|^p\dd x
&\le (1+\varepsilon_1)k^p \gamma^p
\int_\Omega u^p f^p \dd x
+\dfrac{c \gamma^p}{\varepsilon_1^{p-1}}\int_\Omega \Big|\chi'\big( \gamma \rho(x)\big)\Big|^p e^{pk \gamma \rho_L(x)} u(x)^p \dd x\\
& \le
(1+\varepsilon_1)k^p \gamma^p
\int_\Omega u^p f^p \dd x
+\dfrac{c\gamma^p}{\varepsilon_1^{p-1}} \int_{x\in\Omega: \rho(x)\le \frac{a}{\gamma}}
\Big|\chi'\big( \gamma \rho(x)\big)\Big|^p e^{pk \gamma \rho_L(x)} u(x)^p \dd x\\
&\le
(1+\varepsilon_1)k^p \gamma^p
\int_\Omega u^p f^p \dd x
+\dfrac{c c_0^p\gamma^p}{\varepsilon_1^{p-1}} e^{pka},
\end{align*}
where we used the normalization of $u$ on the last step.
The substitution into \eqref{eq-f1} gives
\begin{equation}
       \label{eq-f2}
\bigg((1-\varepsilon_0)^p
- \dfrac{1+\varepsilon_1}{(1-\delta_0)^{p-1}} p^p k^p
\bigg)\gamma^p\int_\Omega u^p f^p \dd x
+\delta_0\int_\Omega |\nabla u|^p f^p\dd x
\le 
\dfrac{C \gamma^p}{(1-\delta_0)^{p-1}\varepsilon_1^{p-1}}\,e^{pka}
\end{equation}
with $C:=c c_0^p p^p$. Note that all the estimates are uniform in the parameter
$L$ entering the definition of $\rho_L$, hence,
one can send $L$ to $+\infty$, which means that \eqref{eq-f2}
also holds for
\[
f(x):=\chi\big( \gamma \rho(x)\big) e^{k \gamma \rho(x)}.
\]

Recall that $\varepsilon_0\in [0,1)$ must satisfy \eqref{eq-eps0} while
$\delta_0 \in(0,1)$, $\varepsilon_1\in(0,1)$ and $k >0$ are arbitrary.
In particular, if $k\in(0,p^{-1})$ is fixed, then one can choose $\varepsilon_0$, $\varepsilon_1$
and $\delta_0$ positive but sufficiently small to have
\[
(1-\varepsilon_0)^p
- \dfrac{1+\varepsilon_1}{(1-\delta_0)^{p-1}} \ p^p k^p=:b>0
\]
implying
\[
b\gamma^p\int_\Omega u^p f^p \dd x
+\delta_0\int_\Omega |\nabla u|^p f^p\dd x\le 
C'\gamma^p, \quad C':=\dfrac{C}{(1-\delta_0)^{p-1}\varepsilon_1^{p-1}}e^{pka},
\]
and \eqref{eq-decay} follows from
\begin{align*}
&\int\limits_{\dist(x,\partial\Omega)> \frac{a}{\gamma}}
\Big(\big|\nabla u(x)\big|^p +\gamma^p \big|u(x)\big|^p\Big)
\exp\Big(\tau \gamma \dist(x,\partial\Omega)\Big)\dd x\\
&\qquad\qquad\qquad \le\int\limits_{\dist(x,\partial\Omega)> \frac{a}{\gamma}}
\Big(\big|\nabla u(x)\big|^p +\gamma^p \big|u(x)\big|^p\Big)
f(x)^p\dd x, \qquad \tau:= kp \in (0,1).
\end{align*}

If $\Omega$ is a bounded Lipschitz domain, then by Proposition~\ref{prop-lip1} the above constructions work
with $\gamma=(p-1)^\frac{1}{p}\alpha^\frac{1}{p-1}$ and $\varepsilon_0=0$,
then we can set $\delta_0=\varepsilon_1=\gamma^{-1}$ and $k=p^{-1}- B\gamma^{-1}$
with $B>0$ sufficiently large, which gives
\begin{equation}
      \label{eq-esta}
(1-\varepsilon_0)^p
- \dfrac{1+\varepsilon_1}{(1-\delta_0)^{p-1}} \ p^p k^p \ge \dfrac{1}{\gamma}, \quad \alpha\to + \infty,
\end{equation}
and one can proceed in the same way to obtain \eqref{eq-decay2}.

If $\Omega$ is an admissible domain, then by Theorem~\ref{thm-main}
we can take $\gamma=(p-1)^\frac{1}{p}\alpha^\frac{1}{p-1}$
and $\varepsilon_0:=A\gamma^{-1}$, with a suitable large $A>0$, then
by setting $\delta=\varepsilon_1=\gamma^{-1}$ and taking $k=p^{-1}- B \gamma^{-1}$
with a suitable large $B>0$ we obtain the estimate \eqref{eq-esta}
implying \eqref{eq-decay2} again.
\end{proof}

We mention a simple but important consequence which will be used below. Recall that $\Omega_\delta$ and $\Theta_\delta$
are defined in \eqref{eq-td}.
\begin{cor}\label{cor4}
Let $\Omega$ be an admissible domain such that the problem \eqref{eq-inf} admits a minimizer $u$, which
we assume to be non-negative and normalized by $\|u\|_{L^p(\Omega)}=1$,
then for any $\delta>0$ and any $N>0$ there holds
$\|u\|_{W^{1,p}(\Theta_\delta)}=o(\alpha^{-N})$ as $\alpha\to+\infty$.
\end{cor}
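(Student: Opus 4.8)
The plan is to deduce Corollary~\ref{cor4} directly from the decay estimate \eqref{eq-decay2} of Theorem~\ref{prop-lip}. First I would note that $\Theta_\delta=\Omega\setminus\overline{\Omega_\delta}$ consists precisely of those points $x\in\Omega$ with $\dist(x,\partial\Omega)\ge\delta$. Choosing in \eqref{eq-decay2} any fixed $a>0$ (say $a=1$), for $\alpha$ large enough one has $a\alpha^{-\frac{1}{p-1}}<\delta$, so $\Theta_\delta$ is contained in the region of integration, and on $\Theta_\delta$ the exponential weight satisfies
\[
\exp\Big((p-1)^{\frac{1}{p}}\alpha^{\frac{1}{p-1}}\dist(x,\partial\Omega)\Big)\ge
\exp\Big((p-1)^{\frac{1}{p}}\delta\,\alpha^{\frac{1}{p-1}}\Big).
\]
Hence, dropping the positive weight elsewhere and using that the integrand controls $|\nabla u|^p+\alpha^{\frac{p}{p-1}}|u|^p\ge|\nabla u|^p+|u|^p$ on $\Theta_\delta$ for large $\alpha$, we get
\[
\|u\|_{W^{1,p}(\Theta_\delta)}^p
\le\exp\Big(-(p-1)^{\frac{1}{p}}\delta\,\alpha^{\frac{1}{p-1}}\Big)\cdot\cO\big(\alpha^{\frac{2p}{p-1}}\big).
\]

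Since $\alpha^{\frac{1}{p-1}}\to+\infty$, the exponential factor decays faster than any power of $\alpha$, so the right-hand side is $o(\alpha^{-Np})$ for every $N>0$; taking $p$-th roots yields $\|u\|_{W^{1,p}(\Theta_\delta)}=o(\alpha^{-N})$ for every $N>0$, which is the claim. It remains only to check that Theorem~\ref{prop-lip} is applicable: the hypotheses there require $\Omega$ Lipschitz (admissible domains are $C^{1,1}$, hence Lipschitz), the existence of a non-negative $L^p$-normalized minimizer (assumed in the corollary), and $\Lambda(\Omega,p,\alpha)\to-\infty$ — the latter is guaranteed for admissible domains by Theorem~\ref{thm-main}, whose leading term $-(p-1)\alpha^{p/(p-1)}$ tends to $-\infty$. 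All three are in place.

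I do not anticipate a genuine obstacle here; the only point requiring a little care is making sure the region $\{\dist(x,\partial\Omega)>a\alpha^{-1/(p-1)}\}$ in \eqref{eq-decay2} actually contains $\Theta_\delta$ for large $\alpha$ (so that restricting the integral to $\Theta_\delta$ is legitimate) and that the weight, being monotone increasing in $\dist(x,\partial\Omega)$, is bounded below by its value at $\dist=\delta$ on all of $\Theta_\delta$. Both are immediate once one observes that $a\alpha^{-1/(p-1)}\to0<\delta$. The estimate is then simply ``super-polynomial decay beats any polynomial,'' and the conclusion follows.
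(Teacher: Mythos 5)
Your proof is correct and is exactly the argument the paper intends: the paper states Corollary~\ref{cor4} as a ``simple consequence'' of Theorem~\ref{prop-lip} without spelling it out, and you have filled in precisely the expected steps — applying the decay estimate \eqref{eq-decay2} with a fixed $a>0$, noting $\Theta_\delta$ lies inside the region $\{\dist(\cdot,\partial\Omega)>a\alpha^{-1/(p-1)}\}$ for large $\alpha$, bounding the exponential weight from below by $\exp((p-1)^{1/p}\delta\alpha^{1/(p-1)})$ there, and observing that this exponential beats the $\cO(\alpha^{2p/(p-1)})$ bound and any polynomial in $\alpha$. The verification that the hypotheses of Theorem~\ref{prop-lip} hold (admissibility gives Lipschitz, the minimizer exists by assumption, and $\Lambda\to-\infty$ by Theorem~\ref{thm-main}) is also exactly right.
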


Finally we are in position to prove a weak form of a localization of the minimizer near the set
at which the mean curvature of the boundary takes its maximal value.

\begin{theorem}\label{thm-locbd}
Let $\Omega$ be an admissible domain such that the problem \eqref{eq-inf} admits a minimizer $u\equiv u_\alpha$
for large $\alpha$, which is assumed be chosen non-negative and normalized by $\|u\|_{L^p(\Omega)}=1$. Define
$\cH:\Omega\to \RR$ by $\cH(x)=H\big(s(x)\big)$, where
$s(x)\in \partial\Omega$ is given by $\dist(x,\partial\Omega)=\big|x-s(x)\big|$,
then
\begin{equation}
   \label{eq-lbd}
\int_{\Omega} \big(H_\mx - \cH)\, u^p \,\dd x=o(1), \quad \alpha\to+\infty.
\end{equation}
\end{theorem}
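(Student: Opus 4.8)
\emph{Proof plan.} The idea is to use the minimizer $u$ as its own test function, to bound $\Lambda(\Omega,p,\alpha)$ from below fiberwise in the collar $\Omega_\delta$ by means of the one-dimensional eigenvalue $\lambda(\alpha,p,s)$ of~\eqref{eq-1d}, and then to compare this lower bound with the sharp two-term upper bound furnished by Theorem~\ref{thm-main}; the gap between the two will be exactly~\eqref{eq-lbd}.

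Concretely, I would first fix $\delta>0$ small enough that the map $\Phi$ of~\eqref{eq-st} is a bi-Lipschitz change of variables, set $v:=u\circ\Phi\in W^{1,p}(\Sigma)$ and $m(s):=\int_0^\delta v(s,t)^p\varphi(s,t)\,\dd t$. By Corollary~\ref{cor4} the contribution of $\Theta_\delta$ is negligible: the term $\int_{\Theta_\delta}|\nabla u|^p\,\dd x\ge 0$ may simply be dropped, while $\int_S m\,\dd\sigma=\int_{\Omega_\delta}u^p\,\dd x=1-o(\alpha^{-N})$ for every $N>0$. Since $\|u\|_{L^p(\Omega)}=1$ and $u$ minimizes~\eqref{eq-inf}, we have $\Lambda(\Omega,p,\alpha)=\int_\Omega|\nabla u|^p\,\dd x-\alpha\int_{\partial\Omega}|u|^p\,\dd\sigma$; combining the pointwise bound $|(\nabla u)\circ\Phi|^p\ge|v_t|^p$ from Section~\ref{sec2}, Fubini's theorem and the very definition of $\lambda(\alpha,p,s)$ yields
\[
\Lambda(\Omega,p,\alpha)\ \ge\ \int_S\Big(\int_0^\delta|v_t(s,t)|^p\varphi(s,t)\,\dd t-\alpha\,v(s,0)^p\Big)\,\dd\sigma(s)\ \ge\ \int_S\lambda(\alpha,p,s)\,m(s)\,\dd\sigma(s).
\]
Plugging in the lower bound $\lambda(\alpha,p,s)\ge(1-p)\alpha^{\frac{p}{p-1}}-M(s)\alpha-C\,\alpha^{\frac{p-2}{p-1}}\log\alpha$ of Lemma~\ref{lemin} (with $C$ independent of $s$ outside a null set) and using $\int_S m\,\dd\sigma=1-o(\alpha^{-N})$, I arrive at
\[
\Lambda(\Omega,p,\alpha)\ \ge\ (1-p)\alpha^{\frac{p}{p-1}}-\alpha\int_S M(s)\,m(s)\,\dd\sigma(s)+o(\alpha).
\]

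Comparing this with $\Lambda(\Omega,p,\alpha)=(1-p)\alpha^{\frac{p}{p-1}}-M_\mx\alpha+o(\alpha)$ from Theorem~\ref{thm-main} and dividing by $\alpha>0$ gives $\int_S M(s)\,m(s)\,\dd\sigma(s)\ge M_\mx+o(1)$; since $M(s)\le M_\mx$ and $m\ge0$ also force $\int_S M(s)\,m(s)\,\dd\sigma(s)\le M_\mx\int_S m\,\dd\sigma\le M_\mx+o(1)$, we conclude $\int_S\big(M_\mx-M(s)\big)\,m(s)\,\dd\sigma(s)=o(1)$. Undoing the change of variables and noting that $s(\Phi(s,t))=s$, so that $\cH(\Phi(s,t))=H(s)$ and $M_\mx-M(s)=(\nu-1)\big(H_\mx-\cH(\Phi(s,t))\big)$, this turns into $\int_{\Omega_\delta}(H_\mx-\cH)\,u^p\,\dd x=o(1)$. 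Adding $\int_{\Theta_\delta}(H_\mx-\cH)\,u^p\,\dd x=\cO\big(\|u\|_{L^p(\Theta_\delta)}^p\big)=o(1)$, again by Corollary~\ref{cor4}, completes the proof of~\eqref{eq-lbd}.

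The only real difficulty is the bookkeeping of error terms: one has to be sure that the $s$-uniform remainder $\cO(\alpha^{\frac{p-2}{p-1}}\log\alpha)$ of Lemma~\ref{lemin}, the $o(\alpha^{-N})$ mass defect on $\Theta_\delta$ from Corollary~\ref{cor4}, and the $o(\alpha)$ remainder of Theorem~\ref{thm-main} all survive the final division by $\alpha$ as $o(1)$. This is precisely where the uniformity in $s$ in Lemma~\ref{lemin} and the super-polynomial decay of $\|u\|_{W^{1,p}(\Theta_\delta)}$ are used; apart from that, no new analytic ingredient is required, only that $\Omega$ be admissible so that Theorem~\ref{thm-main} and Corollary~\ref{cor4} apply.
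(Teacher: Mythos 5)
Your proposal is correct and follows essentially the same route as the paper's proof: test with the minimizer itself, discard the contribution of $\Theta_\delta$ to the gradient energy, pass to $(s,t)$ coordinates in the collar, drop the tangential gradient, apply the fiberwise lower bound of Lemma~\ref{lemin} and compare with the two-term asymptotics of Theorem~\ref{thm-main}, finally invoking Corollary~\ref{cor4} to control the mass in $\Theta_\delta$. The only cosmetic difference is that you package the $t$-integral of $v^p\varphi$ into $m(s)$ and argue via a two-sided squeeze on $\int_S M\,m\,\dd\sigma$, whereas the paper rearranges the same inequality directly; the content is identical.
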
  

\begin{proof}
It is well known that $s(x)$ is uniquely defined for almost all $x\in\Omega$.
Moreover, in view of Corollary~\ref{cor4}, it is sufficient to show that
\begin{equation}
   \label{eq-od}
\int_{\Omega_\delta} \big(H_\mx - \cH)\, u^p \dd x=o(1), \quad \alpha\to+\infty,
\end{equation}
for some $\delta>0$. We assume that $\delta$ is sufficiently small such that
the map \eqref{eq-st} is bijective, then for $x\in\Omega_\delta$
one has $s=s(x)$ iff $x=\Phi(s,t)$
for some $t\in(0,\delta)$. Furthermore, by the constructions of Section~\ref{sec2}
one has
\begin{align}
\Lambda(\Omega,p,\alpha) \|u\|^p_{L^p(\Omega)}
&=\int_\Omega |\nabla u|^p\dd x-\alpha \int_{\partial\Omega} u^p\,\dd\sigma  \ge
\int_{\Omega_\delta} |\nabla u|^p\dd x-\alpha \int_{\partial\Omega} u^p\,\dd\sigma \nonumber \\
&\ge \label{eq-lll}
\int_{\Sigma} \Big| C_- \big|\nabla_s v(s,t)\big|^2 + v_t(s,t)^2\Big|^{\frac p 2} \varphi(s,t) \dd\sigma(s)\dd t-\alpha \int_S \big|v(s,0)\big|^p\,\dd\sigma(s),
\end{align}
where $v:=u\circ \Phi$ and $C_->0$. Using Lemma~\ref{lemin} we have
\begin{align*}
& \int_{\Sigma} \Big| C_- \big|\nabla_s v(s,t)\big|^2 + v_t(s,t)^2\Big|^{\frac p 2} \varphi(s,t) \dd\sigma(s)\dd t-\alpha \int_S \big|v(s,0)\big|^p\,\dd\sigma(s)\\
& \qquad \ge
\int_{\Sigma} \big|v_t(s,t)\big|^p \varphi(s,t) \dd\sigma(s)\dd t-\alpha \int_S \big|v(s,0)\big|^p\,\dd\sigma(s)\\
& \qquad \ge \int_\Sigma  \Big[(1-p)\alpha^{\frac{p}{p-1}}-(\nu-1)H(s)\alpha+o(\alpha)\Big] v(s,t)^p
 \varphi(s,t) \dd\sigma(s)\dd t\\
& \qquad  =(1-p)\alpha^{\frac{p}{p-1}} \|u\|^p_{L^p(\Omega_\delta)}
-\alpha (\nu-1)\int_{\Omega_\delta} \cH u^p \dd x + o(\alpha) \|u\|^p_{L^p(\Omega_\delta)}.
\end{align*}
The substitution into \eqref{eq-lll} and the asymptotic expansion \eqref{eq-main} for $\Lambda(\Omega,p,\alpha)$ give
\begin{multline*}
(1-p)\alpha^{\frac{p}{p-1}} \|u\|^p_{L^p(\Omega)}-\alpha (\nu-1) H_\mx \|u\|^p_{L^p(\Omega)}
+o(\alpha) \|u\|^p_{L^p(\Omega)}\\
\ge
(1-p)\alpha^{\frac{p}{p-1}} \|u\|^p_{L^p(\Omega_\delta)}
-\alpha(\nu-1) \int_{\Omega_\delta} \cH u^p \dd x + o(\alpha) \|u\|^p_{L^p(\Omega_\delta)}.
\end{multline*}
Using $1=\|u\|^p_{L^p(\Omega)}=\|u\|^p_{L^p(\Omega_\delta)}+\|u\|^p_{L^p(\Theta_\delta)}$ we arrive at
\[
\alpha \int_{\Omega_\delta} (H_\mx -\cH)\, u^p \,\dd x \le
o(\alpha)-\dfrac{p-1}{\nu-1}\,\alpha^{\frac{1}{p-1}} \|u\|^p_{L^p(\Theta_\delta)}
-H_\mx \|u\|^p_{L^p(\Theta_\delta)},
\]
and the result follows from Corollary~\ref{cor4}.
\end{proof}

\appendix

\section{\bf Solvable cases}\label{seca}

For the sake of completeness, let us mention some cases in which  $\Lambda(\Omega,p,\alpha)$
can be computed explicitly.

\begin{prop}
For any $p\in(1,\infty)$ and $\alpha>0$ there holds $\Lambda(\RR_+,p,\alpha)=(1-p)\alpha^\frac{p}{p-1}$, and
the minimizer $u_*$ for Eq.~\eqref{eq-inf} is  given by $u_*(t)=\exp \big(-\alpha^{\frac{1}{p-1}} t\big)$.
\end{prop}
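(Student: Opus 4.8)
The plan is to treat this as the one–dimensional instance of \eqref{eq-inf}, with $\Omega=\RR_+$ and $\partial\Omega=\{0\}$, and to prove the two matching bounds
\[
(1-p)\beta^p\ \le\ \Lambda(\RR_+,p,\alpha)\ \le\ (1-p)\beta^p,\qquad \beta:=\alpha^{\frac1{p-1}}\ \ (\text{so }\alpha=\beta^{p-1}).
\]
For the upper bound I would simply insert the candidate $u_*(t)=e^{-\beta t}\in W^{1,p}(\RR_+)$: a direct computation gives $\int_0^\infty|u_*'|^p=\beta^{p-1}/p$, $\int_0^\infty|u_*|^p=1/(p\beta)$ and $|u_*(0)|^p=1$, so the Rayleigh quotient of $u_*$ equals $\big(\tfrac{\beta^{p-1}}{p}-\beta^{p-1}\big)\,p\beta=(1-p)\beta^p=(1-p)\alpha^{\frac p{p-1}}$. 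This already yields $\Lambda\le(1-p)\alpha^{\frac p{p-1}}$ and shows that, if the infimum equals this value, it is attained at $u_*$.

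For the lower bound the key idea is to bound the boundary term $\alpha|u(0)|^p$ by the two integrals appearing in the quotient. Every $u\in W^{1,p}(\RR_+)$ is, after modification on a null set, absolutely continuous on $[0,\infty)$, and $(|u|^p)'=p|u|^{p-2}u\,u'$ lies in $L^1(0,\infty)$ by H\"older's inequality; since $|u|^p\in L^1$ as well, this forces $u(t)\to0$ as $t\to\infty$, whence
\[
|u(0)|^p=-\int_0^\infty\!\big(|u|^p\big)'\,\dd t\ \le\ p\int_0^\infty |u|^{p-1}|u'|\,\dd t .
\]
Then I would apply the weighted Young inequality $xy\le\tfrac1p\lambda^p x^p+\tfrac1{p'}\lambda^{-p'}y^{p'}$ (with $p':=\tfrac p{p-1}$) to $x=|u'|$, $y=|u|^{p-1}$, choosing $\lambda$ so that $\alpha\lambda^p=1$; using $\tfrac p{p'}=p-1$, $\lambda^{-p'}=\beta$ and $\alpha=\beta^{p-1}$ one gets $\alpha|u(0)|^p\le\int_0^\infty|u'|^p+(p-1)\beta^p\int_0^\infty|u|^p$, that is,
\[
\int_0^\infty|u'|^p\,\dd t-\alpha|u(0)|^p\ \ge\ (1-p)\beta^p\int_0^\infty|u|^p\,\dd t .
\]
Taking the infimum over $u\not\equiv0$ gives $\Lambda\ge(1-p)\alpha^{\frac p{p-1}}$, and together with the upper bound this proves $\Lambda(\RR_+,p,\alpha)=(1-p)\alpha^{\frac p{p-1}}$ and that $u_*$ is a minimizer.

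It remains to identify all minimizers. Since replacing $u$ by $|u|$ leaves the quotient unchanged, a minimizer may be taken non-negative, and then every inequality in the lower-bound chain must be an equality. Equality in the estimate for $|u(0)|^p$ forces $u\,u'\le0$ a.e., while equality in Young's inequality forces $|u'|=\beta|u|$ a.e.; combining these, on the open set $\{u>0\}$ the function $u$ solves $u'=-\beta u$, so $e^{\beta t}u$ is locally constant there. A short connectedness argument, using continuity of $u$ and $u\not\equiv0$, then shows $\{u>0\}=(0,\infty)$ and $u(t)=u(0)e^{-\beta t}$ with $u(0)>0$, i.e. every minimizer is a nonzero scalar multiple of $u_*$. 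I expect the only mildly delicate points to be the decay $u(t)\to0$ at infinity (settled by the $L^1$ observation above) and the bookkeeping of the equality cases; everything else is an elementary computation.
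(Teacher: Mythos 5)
Your proof is correct and takes essentially the paper's route: the upper bound is the same computation, and your lower bound (FTC to get $|u(0)|^p\le p\int_0^\infty|u|^{p-1}|u'|\,\dd t$, then pointwise Young's inequality with the optimal weight) is the dual of the paper's Hölder-then-optimize-a-scalar argument and yields the same constant $(1-p)\alpha^{p/(p-1)}$. The closing uniqueness analysis of equality cases goes a bit beyond what the paper establishes for this proposition, but it is correct and a natural complement.
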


\begin{proof}
By computing the right-hand side of \eqref{eq-inf}  for $u=u_*$ we obtain the inequality
$\Lambda(\RR_+,p,\alpha)\le (1-p)\alpha^\frac{p}{p-1}$.
For the reverse inequality, we remark that  $\lim_{x\to+\infty} u(x)=0$
for any $u\in W^{1,p}(\RR_+)$ and, using the H\"older inequality,
\begin{gather*}
\big|u(0)\big|^p
=
-p\int_0^{+\infty} \big| u\big|^{p-1}
| u|' \,\dd t
\le
p\int_0^{+\infty} \big| u\big|^{p-1}
\big|| u|'\big| \,\dd t\le
p\|u\|_p^{p-1} \big\| |u|' \big\|_p
\le
p\|u\|^{p-1}_p \|u'\|_p.
\end{gather*}
Therefore,
\begin{multline*}
\inf_{\substack{u\in W^{1,p}(\RR_+)\\ u\not\equiv 0}}\dfrac{\displaystyle \int_0^{+\infty} \big|u'(t)\big|^p \dd t - \alpha\big|u(0)\big|^p}{\displaystyle\int_0^{+\infty} |u(t)|^p\dd t}
=
\inf_{\substack{v\in W^{1,p}(\RR_+)\\ \|v\|_p=1}}
\Big( \|v'\|^p_p - \alpha\big|v(0)\big|^p\Big)\\
\ge \inf_{\substack{v\in W^{1,p}(\RR_+)\\ \|v\|_p=1}}
\Big( \|v'\|^p_p - \alpha p \|v'\|_p\Big)
\ge \inf_{x\in \RR_+} (x^p-p\alpha x)=(1-p)\alpha^\frac{p}{p-1},
\end{multline*}
which gives the sought result.
\end{proof}

As observed in \cite{lp}, the one-dimensional result can be used to study
the  infinite planar sectors
\[
U_\theta:= \big\{ (x_1,x_2)\in\RR^2: \big|\arg (x_1+i x_2)\big|<\theta\big\}, \quad 0<\theta<\pi.
\]
Proceeding literally as in Lemma 2.6 and Lemma 2.8 of \cite{lp} one arrives at the following result:
\begin{prop}\label{propa2}
Let $\alpha>0$ and $p\in(1,+\infty)$, then for $\theta\ge \dfrac{\pi}{2}$ there holds $\Lambda(U_\theta,p,\alpha)=(1-p)\alpha^\frac{p}{p-1}$, while
for $\theta <\dfrac{\pi}{2}$ one has
\[
\Lambda(U_\theta,p,\alpha)=(1-p)\Big(\dfrac{\alpha}{\sin\theta}\Big)^\frac{p}{p-1}<(1-p)\alpha^\frac{p}{p-1},
\] 
which is attained on
\[
u(x_1,x_2)=\exp\Big(- \Big(\dfrac{\alpha}{\sin\theta}\Big)^{\frac{1}{p-1}}x_1\Big).
\]

\end{prop}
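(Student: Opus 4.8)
The plan is to reduce the two–dimensional problem on $U_\theta$ to the one–dimensional identity $\Lambda(\RR_+,p,\tilde\alpha)=(1-p)\tilde\alpha^{\frac{p}{p-1}}$ established in the preceding proposition, following the lines of \cite[Lemmas~2.6 and~2.8]{lp}, where the case $p=2$ is carried out; the exponent $p$ enters only through that one–dimensional relation, which by translation invariance gives, for every $a\in\RR$, $\tilde\alpha>0$ and $w\in W^{1,p}(a,\infty)$, the fiberwise estimate
\begin{equation*}
\int_a^\infty\big|w'(t)\big|^p\,\dd t-\tilde\alpha\,\big|w(a)\big|^p\ \ge\ (1-p)\,\tilde\alpha^{\frac{p}{p-1}}\int_a^\infty\big|w(t)\big|^p\,\dd t .
\end{equation*}

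For $\theta<\pi/2$ the sector is convex and $U_\theta=\{x_1>|x_2|\cot\theta\}$. First I would slice it by the horizontal half–lines $\ell_c:=U_\theta\cap(\RR\cross\{c\})$, $c\in\RR$, each of which lies in $U_\theta$ and terminates at a point of $\partial U_\theta$, noting that $\dd\sigma=\dd x_2/\sin\theta$ along either edge. For almost every $c$ the slice $x_1\mapsto u(x_1,c)$ belongs to $W^{1,p}(\ell_c)$ with endpoint value equal, $\sigma$-almost everywhere, to the boundary trace of $u$; applying the fiberwise estimate with $\tilde\alpha=\alpha/\sin\theta$, integrating in $c$ and using $|\partial_{x_1}u|\le|\nabla u|$, one gets $\Lambda(U_\theta,p,\alpha)\ge(1-p)(\alpha/\sin\theta)^{\frac{p}{p-1}}$. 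For the matching upper bound I would test \eqref{eq-inf} on $u(x_1,x_2)=e^{-cx_1}$, which belongs to $W^{1,p}(U_\theta)$ precisely because $\cot\theta>0$; a direct computation gives the Rayleigh quotient $c^p-p\alpha c/\sin\theta$, whose minimum over $c>0$ is attained at $c=(\alpha/\sin\theta)^{1/(p-1)}$ and equals $(1-p)(\alpha/\sin\theta)^{\frac{p}{p-1}}$, and this test function is then the minimiser.

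For $\theta\ge\pi/2$ the sector is not convex, the horizontal slices are no longer orthogonal to the edges, and $e^{-cx_1}\notin W^{1,p}(U_\theta)$; so instead I would split $U_\theta$, up to its vertex, into the three disjoint sub-sectors
\[
\mathcal R_+=\Big\{\arg(x_1+ix_2)\in\big[\theta-\tfrac{\pi}{2},\theta\big)\Big\},\ \
\mathcal R_-=\Big\{\arg(x_1+ix_2)\in\big(-\theta,\tfrac{\pi}{2}-\theta\big]\Big\},\ \
\mathcal R_0=\Big\{\arg(x_1+ix_2)\in\big(\tfrac{\pi}{2}-\theta,\theta-\tfrac{\pi}{2}\big)\Big\}
\]
(with $\mathcal R_0=\emptyset$ when $\theta=\pi/2$). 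Each of $\mathcal R_+$ and $\mathcal R_-$ is foliated isometrically by the half–lines perpendicular to the corresponding edge, so the fiberwise estimate applies along these leaves with $\tilde\alpha=\alpha$ — the Jacobian of the foliation being $1$, which is exactly what produces the better constant — whereas on $\mathcal R_0$, which meets $\partial U_\theta$ only at the vertex, one has trivially $|\nabla u|^p\ge0\ge(1-p)\alpha^{\frac{p}{p-1}}|u|^p$. Adding the three resulting inequalities gives $\Lambda(U_\theta,p,\alpha)\ge(1-p)\alpha^{\frac{p}{p-1}}$; the reverse inequality I would obtain by transplanting an $L^p$-normalised, compactly supported near–minimiser of $\Lambda(\RR^{\nu-1}\cross\RR_+,p,\alpha)=(1-p)\alpha^{\frac{p}{p-1}}$ (see \eqref{eq-half1} with $\nu=2$) into a small ball centred at a point of an edge far from the vertex, where $U_\theta$ coincides with a half–plane.

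The main point requiring care — handled exactly as in \cite{lp} and insensitive to the value of $p$ — is the fiberwise passage: restricting a $W^{1,p}(U_\theta)$ function to almost every leaf of each foliation, identifying its one–dimensional boundary value with the trace of $u$ on $\partial U_\theta$, and checking that the chosen leaves genuinely partition $U_\theta$ into half–lines each meeting $\partial U_\theta$ at a single point (for $\theta\ge\pi/2$ this is precisely the content of the decomposition into $\mathcal R_\pm$ and $\mathcal R_0$). All the rest is the elementary one–dimensional computation recorded above.
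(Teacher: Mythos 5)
Your proof is correct and follows, for general $p$, essentially the strategy of Lemmas~2.6 and~2.8 of Levitin--Parnovski that the paper explicitly invokes: for $\theta<\pi/2$ the horizontal slicing with fiberwise Robin parameter $\alpha/\sin\theta$ (coming from the Jacobian $\dd\sigma=\dd x_2/\sin\theta$) together with the explicit test function $e^{-cx_1}$, and for $\theta\ge\pi/2$ the decomposition into the two $\pi/2$-wide sub-sectors foliated perpendicularly to the respective edges plus the dead zone $\mathcal R_0$, complemented by transplantation of half-plane near-minimizers. Since the paper gives no proof beyond that citation, your write-up amounts to a correct filling-in of that reference, with the role of $p$ entering only through the one-dimensional identity $\Lambda(\RR_+,p,\tilde\alpha)=(1-p)\tilde\alpha^{p/(p-1)}$, exactly as you note.
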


\section{\bf An auxiliary inequality}\label{secb}

\begin{prop}\label{prop-ineq}
Let $p>1$, then for any $\varepsilon\in(0,1)$
and for all $a,b\ge 0$ there holds
\[
(a+b)^p\le (1+\varepsilon) a^p +\dfrac{c}{\varepsilon^{p-1}}\, b^p,
\quad
c:= \max\Big\{\big(1-2^{\frac{1}{1-p}}\big)^{1-p},1\Big\}.
\]
\end{prop}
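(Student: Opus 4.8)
The plan is to derive the inequality from the convexity of $t\mapsto t^p$ on $[0,\infty)$ together with an optimal choice of a splitting parameter; the constant $c$ will then emerge from an elementary convexity estimate, and I do not expect a genuine obstacle.

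The first step is a weighted Young inequality. For an arbitrary $\theta\in(0,1)$ I would write $a+b=\theta\cdot(a/\theta)+(1-\theta)\cdot(b/(1-\theta))$ and apply Jensen's inequality to the convex map $t\mapsto t^p$, obtaining
\[
(a+b)^p\le\theta^{1-p}\,a^p+(1-\theta)^{1-p}\,b^p,\qquad a,b\ge 0 .
\]
This already covers the degenerate cases $a=0$ and $b=0$, so no separate boundary discussion is needed. I would then fix $\theta$ so that the first coefficient equals the target value, namely $\theta:=(1+\varepsilon)^{-1/(p-1)}$; this lies in $(0,1)$ for every $\varepsilon\in(0,1)$ and satisfies $\theta^{1-p}=1+\varepsilon$. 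After this choice the proposition reduces to the single inequality $(1-\theta)^{1-p}\le c\,\varepsilon^{1-p}$.

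It remains to establish this last bound, which is the only step with real content. Writing $q:=1/(p-1)>0$ and using that $x\mapsto x^{p-1}$ is increasing on $(0,\infty)$, the inequality $(1-\theta)^{1-p}\le c\,\varepsilon^{1-p}$ is equivalent to $\varepsilon/(1-\theta)\le c^{1/(p-1)}=(1-2^{-q})^{-1}$, i.e., after clearing denominators and rearranging, to
\[
(1+\varepsilon)^{-q}\ \le\ 1-(1-2^{-q})\,\varepsilon ,\qquad \varepsilon\in(0,1)
\]
(here one uses that the maximum defining $c$ is always attained by its first argument, since $(1-2^{1/(1-p)})^{1-p}>1$). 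The right-hand side is exactly the affine function of $\varepsilon$ taking the value $1$ at $\varepsilon=0$ and $2^{-q}$ at $\varepsilon=1$, i.e., the chord of the graph of $\varepsilon\mapsto(1+\varepsilon)^{-q}$ over $[0,1]$; since this function is convex — its second derivative $q(q+1)(1+\varepsilon)^{-q-2}$ is positive — it lies below the chord on $[0,1]$, which is the claimed inequality. Substituting back into the weighted Young bound finishes the proof. The only point requiring care is the bookkeeping that rewrites $(1-\theta)^{1-p}\le c\,\varepsilon^{1-p}$ as the chord inequality; once that is in place, convexity of $(1+\varepsilon)^{-q}$ does the rest.
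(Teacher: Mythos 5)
Your proof is correct, and it takes a genuinely different route from the paper's, although both ultimately hinge on a one-variable convexity estimate. You begin with the weighted Young inequality $(a+b)^p\le\theta^{1-p}a^p+(1-\theta)^{1-p}b^p$, pick $\theta=(1+\varepsilon)^{-1/(p-1)}$ so that the first coefficient is exactly $1+\varepsilon$, and then control the second coefficient by showing that $(1+\varepsilon)^{-q}$ (with $q=1/(p-1)$) lies below its chord over $[0,1]$. The paper instead homogenizes the two-variable inequality to $(1+t)^p\le(1+\varepsilon)t^p+c\varepsilon^{1-p}$, locates the critical point $t_\varepsilon=1/((1+\varepsilon)^{1/(p-1)}-1)$, and after the substitution $\sigma=(1+\varepsilon)^{1/(p-1)}-1$ bounds $\sup_\sigma\big(((1+\sigma)^p-(1+\sigma))/\sigma\big)^{p-1}$ using the convexity of $\sigma\mapsto(1+\sigma)^p-(1+\sigma)$. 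The two arguments are dual to one another — your optimal splitting parameter $\theta$ and the paper's critical $t_\varepsilon$ are linked by $t_\varepsilon=\theta/(1-\theta)$ — but your version makes the bivariate structure transparent from the start via Jensen, whereas the paper's is more computational. Your parenthetical observation that the first argument of the $\max$ defining $c$ is always strictly larger than $1$ (so the $\max$ is cosmetic) is also correct and checks out. One small point worth making explicit in a final write-up: the passage from $(1-\theta)^{1-p}\le c\,\varepsilon^{1-p}$ to $\varepsilon/(1-\theta)\le c^{1/(p-1)}$ uses that $t\mapsto t^{p-1}$ is increasing, which you do mention but could state at the moment the rearrangement is performed.
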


\begin{proof}
By homogenity, it is sufficient to show that $(1+t)^p\le (1+\varepsilon) t^p+ c\varepsilon^{1-p}$ for
all $t\ge 0$. Denote
$f_\varepsilon(t):=(1+t)^p-(1+\varepsilon) t^p$, then one simply needs an upper estimate for
$C(\varepsilon):=\varepsilon^{p-1}\sup_{t\in\RR_+}f_\varepsilon(t)$. We have $f_\varepsilon(0)=1$ and $f_\varepsilon(+\infty)=-\infty$.
The equation $f'_\varepsilon(t)=0$ has a unique solution
\[
t=t_\varepsilon=\dfrac{1}{(1+\varepsilon)^{\frac{1}{p-1}}-1},
\quad
f(t_\varepsilon)=\dfrac{1+\varepsilon}{\big((1+\varepsilon)^{\frac{1}{p-1}}-1\big)^{p-1}},
\]
implying
\[
C(\varepsilon)=\max\Big\{\dfrac{\varepsilon^{p-1}(1+\varepsilon)}{\big((1+\varepsilon)^{\frac{1}{p-1}}-1\big)^{p-1}},1\Big\}.
\]
Denote $\sigma:=(1+\varepsilon)^{\frac{1}{p-1}}-1$, then
\[
\sup_{\varepsilon\in(0,1)} \dfrac{\varepsilon^{p-1}(1+\varepsilon)}{\big((1+\varepsilon)^{\frac{1}{p-1}}-1\big)^{p-1}}
=\sup_{\sigma\in (0,\sigma_1)}\Big(\dfrac{(1+\sigma)^p-(1+\sigma)}{\sigma}\Big)^{p-1}, \quad
\sigma_1:= 2^{\frac{1}{p-1}}-1.
\]
Using the convexity of $\varphi(\sigma)=(1+\sigma)^p-(1+\sigma)$ we have
\[
\varphi(\sigma)\le \varphi(0) + \dfrac{\varphi(\sigma_1)-\varphi(0)}{\sigma_1}\, \sigma\equiv
\dfrac{2^{\frac{1}{p-1}}}{2^{\frac{1}{p-1}}-1}\, \sigma, \quad \sigma\in(0,\sigma_1),
\]
which gives the sought inequality.
\end{proof}

\section{\bf Remainder estimates for more regular domains}\label{appc}

If a stronger regularity of $\Omega$ is imposed, the remainder in Theorem~\ref{thm-main}
can be made more explicit.

\begin{prop}\label{pc1}
In Theorem~\ref{thm-main} assume additionally that the boundary of $\Omega$ it $C^3$ smooth
and that the mean curvature attains the maximum value $H_\mx$, then the remainder estimate in \eqref{eq-main}
can
be improved to $\cO(\alpha^{1-\kappa})$ with
\begin{equation}
  \label{eq-k1}
\kappa=\begin{cases}
\dfrac{1}{p+1}, & p\in (1,2],\\[\bigskipamount]
\dfrac{1}{3(p-1)}, & p \in (2,\infty).
\end{cases}
\end{equation}
If, in addition, $\Omega$ is $C^4$ smooth, then one can take
\begin{equation}
  \label{eq-k2}
\kappa=\begin{cases}
\dfrac{2}{p+2}, & p\in (1,2],\\[\bigskipamount]
\dfrac{1}{2(p-1)}, & p \in (2,\infty).
\end{cases}
\end{equation}
\end{prop}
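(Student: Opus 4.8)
The plan is to revisit the proof of Theorem~\ref{thm-main} and to sharpen, under the extra hypotheses, the upper bound for $\Lambda^+(p,\alpha)$ obtained in Section~\ref{sec3}; the lower bound needs no new argument. Indeed, Lemma~\ref{lemin} already gives
\[
\Lambda(\Omega,p,\alpha)\ge\Lambda^-(p,\alpha)\ge\mathop{\mathrm{ess\,inf}}_{s\in S}\lambda(\alpha,p,s)=(1-p)\alpha^{\frac{p}{p-1}}-M_\mx\alpha+\cO(\alpha^{\frac{p-2}{p-1}}\log \alpha),
\]
with a remainder depending only on $\|\kappa_j\|_\infty$, and since $\tfrac{p-2}{p-1}=1-\tfrac1{p-1}$ while each value of $\kappa$ in \eqref{eq-k1}--\eqref{eq-k2} satisfies $\kappa<\tfrac1{p-1}$, this remainder is $o(\alpha^{1-\kappa})$. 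It therefore suffices to prove the matching upper bound $\Lambda^+(p,\alpha)\le(1-p)\beta^p-M_\mx\beta^{p-1}+\cO(\beta^{(p-1)(1-\kappa)})$ as $\beta=\alpha^{\frac1{p-1}}\to+\infty$, where $M_\mx=(\nu-1)H_\mx$.

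The decisive new input is a quantitative bound on the curvature defect near a point where the maximum is attained. When $\partial\Omega$ is $C^3$, the shape operator $L_s$ has $C^1$ entries, so $H=\tfrac1{\nu-1}\mathop{\mathrm{tr}} L_s$ is a $C^1$ function of $s$; in particular $\mathop{\text{ess\,sup}} H$ is an honest maximum, attained by hypothesis at some $s_0\in S$, near which $H$ is Lipschitz, whence $M_\mx-M(s)\le C\dist(s,s_0)$ for $s$ close to $s_0$. If $\partial\Omega$ is $C^4$, then $H\in C^2$, $\nabla H(s_0)=0$, and a second-order Taylor expansion upgrades this to $M_\mx-M(s)\le C\dist(s,s_0)^2$. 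Writing $\theta=1$ in the $C^3$ case and $\theta=2$ in the $C^4$ case, the defect is thus $\cO(\dist(s,s_0)^\theta)$ near $s_0$. This makes it unnecessary to invoke the Lebesgue density argument of Section~\ref{sec3}: one simply tests on $u(s,t)=v(s)\psi(t)$ with $\psi(t)=e^{-\beta t}\chi(t)$ as before, now taking $v=\Phi_{B_\ell}$, the $L^p$-normalised first Dirichlet $p$-eigenfunction of a ball $B_\ell\subset\RR^{\nu-1}$ of radius $\ell\to0$, transplanted to $S$ via Riemann normal coordinates centred at $s_0$. Then $\int (M_\mx-M)\,v^p\,\dd\sigma=\cO(\ell^\theta)$, while the scaling $\Psi_{B_\ell}(y)=\ell^{\frac{1-\nu}p}\Psi_{B_1}(y/\ell)$ gives $\lambda_{B_\ell}=\lambda_{B_1}\ell^{-p}$.

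Carrying this choice through the computation of Section~\ref{sec3} while keeping $\ell$ as a free parameter, one obtains for $p\in(1,2]$
\[
\Lambda^+(p,\alpha)\le(1-p)\beta^p-M_\mx\beta^{p-1}+\cO\big(\ell^\theta\beta^{p-1}\big)+\cO\big(\ell^{-p}\big)+\cO\big(\beta^{p-2}\big),
\]
the three error terms coming, in order, from the curvature defect, from the term $c\,\lambda_{B_\ell}$ carrying the tangential gradient, and from the $\cO(\beta^{-3})$ remainders in \eqref{eq-phit}; choosing $\ell=\beta^{-(p-1)/(\theta+p)}$ equalises the first two into $\cO(\beta^{(p-1)p/(\theta+p)})=\cO(\alpha^{1-\theta/(\theta+p)})$, and $\beta^{p-2}=\alpha^{1-\frac1{p-1}}$ is of lower order since $\tfrac{\theta}{\theta+p}<\tfrac1{p-1}$ for $p\le2$. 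For $p\in(2,\infty)$ one keeps instead the parameter $\varepsilon_0\in(0,1)$ of \eqref{eq-ineq} free; the splitting $(a+b)^{p/2}\le(1+\varepsilon_0)a^{p/2}+c\,\varepsilon_0^{1-p/2}b^{p/2}$ replaces the term $\cO(\ell^{-p})$ by $\cO(\varepsilon_0\beta^p)+\cO(\varepsilon_0^{-(p-2)/2}\ell^{-p})$, and the choice $\varepsilon_0=\beta^{-2}\ell^{-2}$, $\ell=\beta^{-1/(\theta+2)}$ balances all three into $\cO(\beta^{p-1-\theta/(\theta+2)})=\cO(\alpha^{1-\theta/((\theta+2)(p-1))})$, again with $\beta^{p-2}$ subdominant. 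Reading off $\kappa$ in the four cases $\theta\in\{1,2\}$, $p\in(1,2]$ or $p\in(2,\infty)$ produces exactly \eqref{eq-k1} and \eqref{eq-k2}.

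The main obstacle is purely technical bookkeeping: one must check that the inversion of the denominator of the Rayleigh quotient (the passage from the pair $(A,B)$ to $A/B$ in Section~\ref{sec3}), together with all cross terms formed from the small corrections $\ell^\theta\beta^{-1}$, $\varepsilon_0$ and $\beta^{-2}$, produces nothing larger than the errors listed above, and that all implicit constants — in \eqref{eq-phit}, in the identity $\lambda_{B_\ell}=\lambda_{B_1}\ell^{-p}$, and in the estimate $\tfrac12\,\dd y\le\dd\sigma\le2\,\dd y$ valid in normal coordinates — remain independent of $\ell$ as $\ell\to0$. No conceptual ingredient beyond the quantitative curvature bound at $s_0$ and the scale optimisation is required.
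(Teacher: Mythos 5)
Your proposal is correct and follows essentially the same route as the paper's Appendix C: exploit the quantitative curvature defect $M_\mx - M(s) = \cO(d(s,s_0)^\theta)$ (with $\theta=1$ for $C^3$, $\theta=2$ for $C^4$), test on a concentrated tangential profile of scale $\ell$ tensored with $e^{-\beta t}\chi(t)$, and optimize $\ell$ (and, for $p>2$, $\varepsilon_0$). The only cosmetic difference is that you reuse the ball eigenfunction $\Phi_{B_\ell}$ from Section~3 as the tangential profile, whereas the paper switches to a fixed smooth bump $f(d(s,s_0)/\mu)$, which is simpler since the $C^3$ regularity renders the Lebesgue-density argument of Section~3 unnecessary; both yield $\cO(\ell^{-p})$ for the normalized tangential gradient and give the same exponents.
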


\begin{proof}
Remark first that the result of section~\ref{sec4} imply
\[
\Lambda(\Omega,p,\alpha)\ge(1-p)\alpha^{\frac{p}{p-1}}-(\nu-1) H_\mx\alpha+\cO(\alpha^{\frac{p}{p-1}}\log \alpha),
\]
and $\alpha^{\frac{p}{p-1}}\log \alpha=o(\alpha^{1-\kappa})$ for $\kappa$ given by \eqref{eq-k1} or \eqref{eq-k2}.
Therefore, it is sufficient to show the upper bound, which
will be done by taking another test function in the computations of Section~\ref{sec3}.

Let $s_0\in S$ be such that $M(s_0)=M_\mx$. As $\partial\Omega$ is $C^3$ smooth, then $M$ is  $C^1$,
and  for some $m\ge 0$ we have $-M_\mx\le -M(s)\le -M_\mx + m d(s,s_0)$ with $d(\cdot,\cdot)$ standing
for the geodesic distance on $S$ and for $s$ sufficiently close to $s_0$,

Let us choose a function $f\in C_c^\infty(\RR)$ which equals $1$ in a neighborhood
of the origin and consider the functions $v\in W^{1,p}(S)$ given by
\begin{equation}
       \label{eq-vf}
v(s)= f\Big(\dfrac{d(s,s_0)}{\mu}\Big),
\end{equation}
where $\mu$ is a positive parameter which tends to $0$ as $\beta\to+\infty$
and will be chosen later.
One can estimate, for large $\beta$,
\begin{equation}
   \label{eq-fint}
\begin{aligned}
A:=\int_S \big|v(s)\big|^p \dd\sigma(s)&=a \mu^{\nu-1}+\cO(\mu^\nu),\quad A^{-1}=\cO(\mu^{1-\nu}),\\
\int_S \big(-M(s)\big) \big|v(s)\big|^p \dd\sigma(s)&= -M_\mx A + \cO(\mu^\nu),\\
\int_S \big|\nabla v(s)\big|^p\dd\sigma(s)&=\cO(\mu^{\nu-p-1}).
\end{aligned}
\end{equation}

Consider first the case $p\in(1,2]$. The substitution into \eqref{eq-lplus} gives 
\begin{align*}
\Lambda^+(p,\alpha)&\le \Bigg\{
\cO(\mu^{\nu-p-1}\beta^{-1}) + \dfrac{1}{p}\beta^{p-1} A - \dfrac{1}{p^2} \beta^{p-2}M_\mx A+\cO(\mu^\nu \beta^{p-2})+\cO(\mu^{\nu-1}\beta^{p-3}) - \beta^{p-1} A\Bigg\}\\
&\quad \times \Big\{
\dfrac{1}{p\beta} A - \dfrac{M_\mx}{p^2\beta^2} A + \cO(\mu^\nu \beta^{-2})+\cO(\mu^{\nu-1}\beta^{-3})
\Big\}^{-1}\\
&=\Big\{
\dfrac{1-p}{p} \beta^{p-1} A - \dfrac{1}{p^2} M_\mx \beta^{p-2} A + \cO(\mu^{\nu-p-1}\beta^{-1}+\mu^\nu \beta^{p-2}+\mu^{\nu-1}\beta^{p-3})
\Big\}\\
&\quad \times \Big\{ \dfrac{A}{p\beta} \Big(1 - \dfrac{M_\mx}{p\beta}+ \cO(\mu \beta^{-1}+\beta^{-2})
\Big)
\Big\}^{-1}\\
&=\Big(
(1-p)\beta^p - \dfrac{1}{p} M _\mx \beta^{p-1}+ \cO\big(\mu^{-p}+\mu \beta^{p-1}+\beta^{p-2}\big)
\Big)  \Big(1 + \dfrac{M_\mx}{p\beta}+ \cO(\mu \beta^{-1}+\beta^{-2})
\Big)\\
&=(1-p)\beta^p - M_\mx \beta^{p-1} +\cO\big( \mu^{-p}+\mu \beta^{p-1}+\beta^{p-2}\big).
\end{align*}
The remainder is optimized by $\mu=\beta^{-\frac{p-1}{p+1}}$, and we arrive
\begin{align*}
\Lambda^+(p,\alpha)&\le (1-p)\beta^p - M_\mx \beta^{p-1} + \cO(\beta^{p-1 - \frac{p-1}{p+1}})\\
&= (1-p)\alpha^\frac{p}{p-1} - M_\mx \alpha + \cO\big(\alpha^{1- \frac{1}{p+1}}\big).
\end{align*}

If, in addition, $\Omega$ is $C^4$ smooth, then $M$ is $C^2$ smooth, 
and for some $m\ge 0$ we have
$-M_\mx\le -M(s)\le -M_\mx + m d(s,s_0)^2$ as $s$ is sufficiently close to $s_0$, which allows one to replace
the second estimate in \eqref{eq-fint} by
\begin{equation}
         \label{eq-c4}
\int_S \big(-M(s)\big) \big|v(s)\big|^p \dd\sigma(s)= -M_\mx A + \cO(\mu^{\nu+1}),
\end{equation}
and a similar computation gives
\[
\Lambda^+(p,\alpha)\le (1-p)\beta^p - M_\mx \beta^{p-1} +\cO\big( \mu^{-p}+\mu^2 \beta^{p-1}+\beta^{p-2}\big).
\]
Hence, taking $\mu=\beta^{-\frac{p-1}{p+2}}$ we arrive at
\begin{align*}
\Lambda^+(p,\alpha)&\le (1-p)\beta^p - M_\mx \beta^{p-1} + \cO(\beta^{p-1 - \frac{2(p-1)}{p+2}})\\
&= (1-p)\alpha^\frac{p}{p-1} - M_\mx \alpha + \cO\big(\alpha^{1- \frac{2}{p+2}}\big).
\end{align*}

For the case $p\in(2,+\infty)$, the substitution of \eqref{eq-fint} into \eqref{eq-lam2}
gives
\begin{align*}
\Lambda^+(p,\alpha)&\le \Big\{ \dfrac{1-p}{p} \beta^{p-1} A - \dfrac{M_\mx}{p^2}  \beta^{p-2} A + \cO\big( \varepsilon_0^\frac{2-p}{2}\mu^{\nu-p-1}\beta^{-1} + \mu^\nu \beta^{p-2}\\
&\quad{}+ \mu^{\nu-1}\beta^{p-3}
+\varepsilon_0 \mu^{\nu-1}\beta^{p-1}
\big)\Big\} \times \Big\{ \dfrac{A}{p\beta} \Big(1 - \dfrac{M_\mx}{p\beta}+ \cO(\mu \beta^{-1}+\beta^{-2})
\Big)
\Big\}^{-1}\\
&=\Big(
(1-p)\beta^p - \dfrac{1}{p} M _\mx \beta^{p-1}+ \cO\big(\varepsilon_0^\frac{2-p}{2}\mu^{-p}+\mu \beta^{p-1}+\beta^{p-2}
+\varepsilon_0 \beta^p\big)
\Big)\\
&\quad \times \Big(1 + \dfrac{M_\mx}{p\beta}+ \cO(\mu \beta^{-1}+\beta^{-2})
\Big)\\
&=(1-p)\beta^p -  M _\mx \beta^{p-1}+ \cO\big(\varepsilon_0^\frac{2-p}{2}\mu^{-p}+\mu \beta^{p-1}+\beta^{p-2}
+\varepsilon_0 \beta^p\big).
\end{align*}
In order to optimize we solve
$\varepsilon_0^\frac{2-p}{2}\mu^{-p}=\mu \beta^{p-1}=\varepsilon_0 \beta^p$,
which gives
\[
\mu=\beta^{-1/3}, \quad \varepsilon_0=\beta^{-4/3}, \quad
\cO\big(\varepsilon_0^\frac{2-p}{2}\mu^{-p}+\mu \beta^{p-1}+\beta^{p-2}
+\varepsilon_0 \beta^p\big)=\cO(\beta^{p-\frac{4}{3}}),
\]
and, therefore, 
\[
\Lambda^+(p,\alpha)\le (1-p)\beta^p - M_\mx \beta^{p-1} + \cO(\beta^{p-\frac{4}{3}})
= (1-p)\alpha^\frac{p}{p-1} - M_\mx \alpha + \cO\big(\alpha^{1- \frac{1}{3(p-1)}}\big).
\]

For $C^4$ domains, a similar computation using \eqref{eq-c4} gives
\[
\Lambda^+(p,\alpha)\le 
(1-p)\beta^p -  M _\mx \beta^{p-1}+ \cO\big(\varepsilon_0^\frac{2-p}{2}\mu^{-p}+\mu^2 \beta^{p-1}+\beta^{p-2}
+\varepsilon_0 \beta^p\big).
\]
The remainder is optimized by
\[
\mu=\beta^{-1/4}, \quad \varepsilon_0=\beta^{-3/2}, \quad
\cO\big(\varepsilon_0^\frac{2-p}{2}\mu^{-p}+\mu \beta^{p-1}+\beta^{p-2}
+\varepsilon_0 \beta^p\big)=\cO(\beta^{p-\frac{3}{2}}),
\]
which gives the sought result. We remark that for $p=2$ in \eqref{eq-k2} we obtain $\kappa=1/2$, which is optimal, see \cite{HK,pp15b}.
\end{proof}

Finally, an easy  revision of the proof of Theorem~\ref{thm-locbd} gives the following result:
\begin{cor}
Under the assumptions of Proposition~\ref{pc1}, the right-hand-side of \eqref{eq-lbd}
can be improved to $\cO(\alpha^{-\kappa})$.
\end{cor}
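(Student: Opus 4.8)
The plan is to re-run the proof of Theorem~\ref{thm-locbd} while keeping track of where the $o(1)$ remainder in \eqref{eq-lbd} originates. Inspecting that argument, one sees that $o(1)$ is produced by exactly two ingredients: the asymptotic expansion \eqref{eq-main} of $\Lambda(\Omega,p,\alpha)$, inserted on the left-hand side of the bracketing inequality, and the one-dimensional asymptotics of Lemma~\ref{lemin}, used after the minoration $C_-\big|\nabla_s v\big|^2 + v_t^2\ge v_t^2$. Every other quantity occurring in the chain of inequalities there is controlled by $\|u\|_{W^{1,p}(\Theta_\delta)}$, which by Corollary~\ref{cor4} decays faster than any power of $\alpha$ and hence cannot limit the convergence rate. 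So it suffices to feed sharper versions of these two ingredients into the same computation.

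First I would replace \eqref{eq-main} by the sharpened expansion of Proposition~\ref{pc1}, whose remainder under the present hypotheses is $\cO(\alpha^{1-\kappa})$ with $\kappa$ as in \eqref{eq-k1} or \eqref{eq-k2}. Second I would note that the remainder of Lemma~\ref{lemin}, namely $\cO\big(\alpha^{\frac{p-2}{p-1}}\log\alpha\big)$ uniformly in $s$ off a null set, is also $o(\alpha^{1-\kappa})$: since $\frac{p-2}{p-1}=1-\frac{1}{p-1}$, this is immediate once one checks that $\kappa<\frac{1}{p-1}$ for every value listed in \eqref{eq-k1}–\eqref{eq-k2} (for $p\le 2$ because $\kappa<1\le\frac{1}{p-1}$, and for $p>2$ because $\frac{1}{3(p-1)}$ and $\frac{1}{2(p-1)}$ are both smaller than $\frac{1}{p-1}$). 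With these two substitutions the last displayed inequality in the proof of Theorem~\ref{thm-locbd} becomes
\[
\alpha\int_{\Omega_\delta}(H_\mx-\cH)\,u^p\,\dd x\le \cO(\alpha^{1-\kappa})-\dfrac{p-1}{\nu-1}\,\alpha^{\frac{1}{p-1}}\|u\|^p_{L^p(\Theta_\delta)}-H_\mx\|u\|^p_{L^p(\Theta_\delta)}.
\]
Dividing by $\alpha$ and invoking Corollary~\ref{cor4} to discard the last two terms yields $\int_{\Omega_\delta}(H_\mx-\cH)\,u^p\,\dd x=\cO(\alpha^{-\kappa})$ (the integrand being non-negative); adding the $\Theta_\delta$-contribution, which is at most $\|H_\mx-\cH\|_{L^\infty}\,\|u\|^p_{L^p(\Theta_\delta)}=o(\alpha^{-\kappa})$ by Corollary~\ref{cor4} once more, gives $\int_\Omega(H_\mx-\cH)\,u^p\,\dd x=\cO(\alpha^{-\kappa})$.

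Since every structural step is already present in the proof of Theorem~\ref{thm-locbd} and the only new input — the improved remainder of Proposition~\ref{pc1} — is at hand, I do not expect any serious obstacle. The only point demanding a little care is purely a bookkeeping one: verifying that the logarithmic remainder $\cO\big(\alpha^{\frac{p-2}{p-1}}\log\alpha\big)$ of Lemma~\ref{lemin} is genuinely absorbed into $\cO(\alpha^{1-\kappa})$ for the listed exponents, and that the $W^{1,p}(\Theta_\delta)$-contributions, being smaller than any power of $\alpha$, may be dropped without affecting the $\cO(\alpha^{-\kappa})$ conclusion.
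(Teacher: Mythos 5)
Your proof is correct and is precisely the ``easy revision'' the paper alludes to: substitute the $\cO(\alpha^{1-\kappa})$ remainder of Proposition~\ref{pc1} for the $o(\alpha)$ in \eqref{eq-main}, observe that Lemma~\ref{lemin}'s uniform remainder $\cO\big(\alpha^{\frac{p-2}{p-1}}\log\alpha\big)$ is already $o(\alpha^{1-\kappa})$ because $\kappa<\frac{1}{p-1}$ in every case of \eqref{eq-k1}--\eqref{eq-k2}, and discard the $\Theta_\delta$ terms via Corollary~\ref{cor4}. The paper offers no written proof for this corollary, so there is nothing further to compare.
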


\section*{\bf Acknowledgments}  H.~K.  has been partially supported by Gruppo Nazionale per Analisi Matematica, la Probabilit\`a e le loro Applicazioni (GNAMPA) of the Istituto Nazionale di Alta Matematica (INdAM). The support of  MIUR-PRIN2010-11 grant for the project ``Calcolo delle variazioni'' (H.~K.), is also gratefully acknowledged. K.P. has been partially supported by CNRS GDR 2279 DynQua. The authors thank Carlo Nitsch and Cristina Trombetti for useful comments on a preliminary version of the work.


\begin{thebibliography}{999}

\bibitem{agm} 
S. Agmon: \emph{Lectures on exponential decay of solutions of second-order
elliptic equations: bounds on eigenfunctions of $N$-body Schr\"odinger
operators.} Mathematical Notes, Vol.~29, Princeton University Press, 1982.


\bibitem{bar}
M. Bareket: \emph{On an isoperimetric inequality for the first eigenvalue of a boundary value problem.}
SIAM J. Math. Anal. {\bf 8}:2 (1977) 280--287.


\bibitem{bel}
Y. Belaud, B. Helffer, L. V\'eron:
\emph{Long-time vanishing properties of solutions of some semilinear parabolic equations.}
Ann. Inst. Henri Poincaré Anal. nonlinear {\bf 18} (2001) 43--68.

\bibitem{rad}
T. Bhattacharya: \emph{Radial symmetry of the first eigenfunction for the $p$-Laplacian in the ball.} Proc. Amer. Math. Soc. {\bf 104} (1988) 169--174.

\bibitem{bp}
V. Bruneau, N. Popoff: \emph{On the negative spectrum of the Robin Laplacian in corner domains.} 
Anal. PDE {\bf 9} (2016) 1259--1283.


\bibitem{bucur}
D. Bucur, D. Daners: \emph{An alternative approach to the Faber-Krahn inequality for Robin problems.}
Calc. Var. Partial Differential Equations {\bf 37} (2010) 75--86.

\bibitem{bur}
V. I. Burenkov: \emph{Extension theory for Sobolev spaces on open sets with Lipschitz
boundaries.} In M.~Krbec, A.~Kufner (Eds.): \emph{Nonlinear analysis, function spaces and applications,  Vol.~6} (Acad. Sci. Czech Rep., Prague, 1999), pp.~1--49.

\bibitem{dai}
Q.-Y. Dai, Y.-X. Fu: \emph{Faber-Krahn inequality for Robin problems involving p-Laplacian.}
Acta Math. Appl. Sinica, Engl. Ser. {\bf 27} (2011) 13--28.

\bibitem{dk}
D. Daners, J. Kennedy: \emph{On the asymptotic behaviour of the eigenvalues of a Robin problem}
Differential Integral Equations {\bf 23} (2010) 659--669.


\bibitem{pino} M. del Pino, C. Flores: \emph{Asymptotic behavior of best constants
and extremals for trace embeddings in expanding domains.}
Commun. Partial Differential Equations {\bf 26} (2001) 2189--2210.


\bibitem{efk} T.~Ekholm, R.~L.~Frank, H.~Kova\v{r}\'{\i}k: \emph{Weak perturbations of the p-Laplacian.}
Calc.~Var.~Partial Differential Equations {\bf 54} (2015) 781--801.

\bibitem{ekl} T.~Ekholm, H.~Kova\v{r}\'{\i}k, A.~Laptev: \emph{Hardy inequalities for $p$-Laplacians with Robin boundary conditions.}
Nonlinear Anal.~Ser. A: Theory Meth. Appl.  {\bf 128} (2015) 365--379.

\bibitem{ev} L. C. Evans, R. F. Gariepy: \emph{Measure theory and fine properties of functions.} Studies in Advanced Mathematics,
CRC Press, Boca Raton, FL, 1992.


\bibitem{em} P. Exner, A. Minakov:
\emph{Curvature-induced bound states in Robin waveguides and their asymptotical properties.}
J. Math. Phys. {\bf 55} (2014) 122101. 

\bibitem{bonder}
J. Fern\'andez Bonder, J. D. Rossi: \emph{Asymptotic behavior of the best Sobolev trace
constant in expanding and contracting domains}. Commun.~Pure Appl.~Anal. {\bf 1} (2002) 75--94.

\bibitem{fnt}
V. Ferone, C. Nitsch, C. Trombetti: \emph{On a conjectured reverse Faber-Krahn
inequality for a Steklov-type Laplacian eigenvalue.}
Commun.~Pure Appl.~Anal. {\bf 14} (2015) 63--82.

\bibitem{fnt2}

V. Ferone, C. Nitsch, C. Trombetti: 
\emph{On the maximal mean curvature of a smooth surface.}
C.~R.~Acad.~Sci.~Paris Ser.~I {\bf 354}  (2016) 891--895.


\bibitem{fk15}
P. Freitas, D.~Krej{\v{c}}i{\v{r}}{\'{\i}}k:
\emph{The first Robin eigenvalue with negative boundary parameter.}
Adv. Math. {\bf 280} (2015) 322--339.

\bibitem{fu} J. H. G. Fu: \emph{Curvature measures and generalized Morse theory.}
J.~Differential Geom. {\bf 30} (1989) 619--642.

\bibitem{gs1}
T.~Giorgi, R.~Smits:
\emph{Eigenvalue estimates and critical temperature in zero fields
for~enhanced surface superconductivity.}
Z. Angew. Math. Phys. {\bf 58}:2 (2007) 224--245.

\bibitem{gr}P.~Grisvard: \emph{Elliptic problems in non-smooth domains}. Classics in Applied Mathematica, vol.~69, Boston, MA: Pitman, 1985. 


\bibitem{HK} B. Helffer, A. Kachmar: 
\emph{Eigenvalues for the Robin Laplacian in domains with variable curvature.}
Trans. Amer. Math. Soc. (to appear). Preprint  arXiv:1411.2700.

\bibitem{HKR} B. Helffer, A. Kachmar, N.~Raymond: 
\emph{Tunneling for the Robin Laplacian in smooth planar domains.}
 Commun. Contemp. Math. (to appear). Preprint arXiv:1509.03986.


\bibitem{lp}
M. Levitin, L. Parnovski:
\emph{On the principal eigenvalue of a Robin problem with a~large parameter.}
Math. Nachr. {\bf 281} (2008) 272--281.


\bibitem{lotor}
V. Lotoreichik: \emph{Lower bounds on the norms of extension operators for Lipschitz domains.} Operators Matrices {\bf 8} (2014) 573--592.

\bibitem{luzhu} Y. Lou, M. Zhu: \emph{A singularly perturbed linear eigenvalue problem in
$C^1$ domains.} Pacific J. Math. {\bf 214} (2004) 323--334.

\bibitem{rossi3} S. Mart\'\i{}nez, J. D. Rossi:
\emph{Isolation and simplicity for the first eigenvalue
of the $p$-Laplacian with a nonlinear boundary condition.}
Abstr. Appl. Anal. {\bf 7} (2002) 287--293.


\bibitem{p13}
K. Pankrashkin: \emph{On the asymptotics of the principal eigenvalue for a Robin problem
  with a large parameter in planar domains.}
Nanosyst. Phys. Chem. Math. {\bf 4} (2013) 474--483.

\bibitem{kp-curv}
K. Pankrashkin: \emph{An inequality for the maximum curvature through a geometric flow.}
Arch. Math. (Basel) {\bf 105} (2015) 297--300.


\bibitem{pp15}
K. Pankrashkin, N. Popoff: \emph{Mean curvature bounds and eigenvalues of Robin Laplacians.}
Calc.~Var.~Partial Differential Equations {\bf 54} (2015) 1947--1961.


\bibitem{pp15b}
K. Pankrashkin, N. Popoff: \emph{An effective Hamiltonian for
the eigenvalue asymptotics of the Robin Laplacian with a large parameter.}
J. Math. Pures Appl. {\bf 106} (2016) 615--650.


\bibitem{rossi}
J. D. Rossi: \emph{Elliptic problems woth nonlinear boundary conditions
and the Sobolev trace theorem.} In
M.~Chipot, P.~Quittner: \emph{Handbook of differential equations. Stationary partial differential equations. Volume 2}
(Elsevier, Amsterdam, 2005), 311--406.

\bibitem{rossi2} J. D. Rossi: \emph{First variations of the best Sobolev trace constant with respect to
the domain.} Canad. Math. Bull. Vol. {\bf 51} (2008) 140--145.

\bibitem{to} P. Tolksdorf: \emph{Regularity for a more general class of quasilinear elliptic equations}. J.
Differential Equations {\bf 51} (1984) 126--150.

\end{thebibliography}
\end{document}